\newcommand{\al}{\alpha}
\newcommand{\de}{\delta}
\newcommand{\e}{\varepsilon}
\newcommand{\ep}{\varepsilon}
\newcommand{\lm}{\lambda}
\newcommand{\p}[1]{\partial^{[#1]}}
\newcommand{\HH}{{\mathbb{H}}}
\newcommand{\CC}{{\mathbb{C}}}
\newcommand{\NN}{{\mathbb{N}}}
\newcommand{\ZZ}{{\mathbb{Z}}}
\newcommand{\RR}{{\mathbb{R}}}
\newcommand{\op}{\operatorname}
\newcommand\grad{\op{grad}}
\newcommand\diag{\op{diag}}
\newcommand{\T}{\Theta}
\newcommand{\GL}{\op{GL}_g(\CC)}
\newcommand{\inv}{^{-1}}
\newcommand{\ch}[2]{\left[\begin{smallmatrix}#1\\#2\end{smallmatrix}\right]}
\newcommand\tch[1]{{\vartheta\left[\begin{smallmatrix}#1\end{smallmatrix}\right]}}
\newcommand{\sm}[2]{\left(\begin{smallmatrix}#1\\#2\end{smallmatrix}\right)}
\theoremstyle{plain}
\newtheorem{thm}{Theorem}
\newtheorem{cor}{Corollary}
\newtheorem{lem}{Lemma}
\newtheorem{prop}{Proposition}
\theoremstyle{remark}
\newtheorem*{rem}{Remark}
\title{Heat equation for theta functions and vector-valued modular forms}
\author{Sara Perna}
\address{Universit\`a degli Studi di Roma ``La Sapienza'', \\Dipartimento di Matematica ``Guido Castelnuovo'', \\Piazzale A. Moro 5, 00185, Roma, Italia.}
\email{perna@mat.uniroma1.it}
\begin{document}
\begin{abstract}
We give a new method for constructing vector-valued modular forms 
 from singular scalar-valued ones. As an application we prove the 
identity between two remarkable spaces of vector-valued modular forms which  seem to be unrelated at a first look, since they are constructed in two very different ways. If $V_{grad}$ is the vector space generated by vector-valued modular forms constructed with gradients of odd theta functions and $V_\T$ is the one generated by vector-valued modular forms arising from second order theta constants with our new construction, we will prove that $V_{grad}=V_\T$. This result could also be proven as a consequence of the ``heat equation'' for theta functions. 
\end{abstract}

\thanks{2010 Mathematics Subject Classification. 11F46, 14K25.}
\maketitle
\section{Introduction}

Some constructions of vector-valued modular forms with respect to the integral symplectic group from scalar-valued ones have been recently investigated in~\cite{CvG} and~\cite{CFvG}. 

In this paper we will construct vector-valued modular forms with respect to a congruence subgroup of the integral symplectic group from singular scalar-valued modular forms.
This new construction comes from a development of the ideas in~\cite[Section 5]{FDP}, where we provided a link between two apparently unrelated methods of constructing holomorphic differential forms on suitable modular varieties.

Let us introduce some notations. 
Let $\HH_g$ denote the Siegel space of degree $g$. This is the space of $g\times g$ complex symmetric matrices with positive definite imaginary part. The group of integral symplectic matrices $\Gamma_g:=\op{Sp}(2g,\ZZ)$ acts properly discontinuously on $\HH_g$.
The action of $\gamma=\sm{A&B}{C&D}\in\Gamma_g$, where $A,\,B,\,C,\,D$ are $g\times g$ matrices, on a point $\tau\in\HH_g$ is defined as
\begin{equation}\label{symplectic action}
\gamma\cdot\tau=(A\tau+B)(C\tau+D)^{-1}.
\end{equation}
We will keep this block notation for an element of $\Gamma_g$ throughout the paper. If $\Gamma\subset\Gamma_g$ is a group acting properly discontinuously on the Siegel space, the quotient $\HH_g/\Gamma$ is called modular variety. It has the structure of a normal analytic space and it is a quasi-projective variety. 

Modular forms are important tools for studying the geometry of these varieties. For example, vector-valued modular forms are strictly related to the definition of holomorphic differential forms on them. 

If $(\rho,V_\rho)$ is a finite dimensional rational representation of $\GL$, a vector-valued modular form with respect to $\rho$ is a holomorphic function $f:\HH_g\to V_\rho$ such that
\[f(\gamma\cdot\tau)=\rho(C\tau+D)\,f(\tau),\]
for any $\gamma\in\Gamma$ and for any $\tau\in\HH_g$
If $\rho=\det^{k/2}$ for some $k\in\NN$, $f$ is said to be a scalar-valued modular form of weight $k/2$. It will be called singular if $k<g$. The complex vector space of such modular forms will be denoted by $[\Gamma,k/2]$. 
 
If $X$ is a complex manifold, denote by $\Omega^n(X)$ the space of holomorphic differential forms on $X$ of degree $n$.  If $g\ge2$ and $n<g(g+1)/2$, there is a natural isomorphism 
\[\Omega^n(X_\Gamma^0)\cong\Omega^n(\HH_g)^\Gamma,\]
where $X_\Gamma^0$ is the set of regular points of $\HH_g/\Gamma$ and $\Omega^n(\HH_g)^\Gamma$ is the space of $\Gamma$-invariant holomorphic differential forms on $\HH_g$ of degree $n$ (cf.~\cite{FP}).
For suitable degrees some of these spaces are known to be trivial. The possible non-trivial spaces are identified with vector spaces of vector-valued modular forms (cf.~\cite{weissauer}). 
For $N=g(g+1)/2$ the identification of $\Gamma$-invariant holomorphic differential forms of degree $N-1$ is given in the following way. 
Let $\Set{d\check{\tau}_{ij}}_{i,j=1}^g$ be the basis of holomorphic differential forms on $\HH_g$ of degree $N-1$ given by
\begin{equation}\label{basis}
d\check{\tau}_{ij}=\pm\; e_{ij}\bigwedge_{\substack{1\le k\le l\le g\\[2pt](k,l)\neq(i,j)}}d\tau_{kl};\;\; e_{ij}=\frac{1+\de_{ij}}{2},
\end{equation}
where the sign is chosen in such a way that  $d\check{\tau}_{ij}\wedge d\tau_{ij}=e_{ij}\bigwedge_{1\le k\le l\le g}d\tau_{kl}$. 
By~\cite{F83} a differential form $\omega\in\Omega^{N-1}(\HH_g)$ is $\Gamma$-invariant if and only if 
\[\omega=\op{Tr}(A(\tau)d\check{\tau}),\]
where $A(\tau)$ is a vector-valued modular form 
 satisfying the transformation rule
\begin{equation}\label{trans_diff}
A(\gamma\cdot\tau)=\op{det}(C\tau+D)^{g+1}\,{}^t(C\tau+D)^{-1}\,A(\tau)\,(C\tau+D)^{-1},
\end{equation}
for any $\gamma\in\Gamma$ and $\tau\in\HH_g$.

Scalar-valued modular forms of suitable weight can be used to define holomorphic differential forms of degree $N-1$ invariant under the action of a group $\Gamma\subset\Gamma_g$, as explained in~\cite{F75}.
For any $f,\,h\in[\Gamma,(g-1)/2]$, the author defines a holomorphic differential form  in $\Omega^{N-1}(\HH_g)^\Gamma$ by applying suitable differential operators to the two scalar-valued modular forms. We will denote by $\omega_{f,\,h}$ such holomorphic differential form. Note that $f$ and $h$ are singular scalar-valued modular forms. This method produces also $\Gamma_g$-invariant holomorphic differential forms for $g=8k+1$, $k\ge 1$ (cf.~\cite{F75,FDP}).  

A second method of constructing elements of $\Omega^{N-1}(\HH_g)^{\Gamma_g}$ is examined in~\cite{SM}. The author starts from gradients of odd theta functions and produces holomorphic differential forms invariant under the action of the full modular group for $g\equiv 0\pmod 4$, $g\neq 5,\,13$.  

These two methods seemed to be totally unrelated until~\cite{FDP} provided a link between them. The key point in the proof is that theta functions satisfy the heat equation.

In this paper we focus on vector-valued modular forms and not only on invariant holomorphic differential forms. We give a new method for constructing vector-valued modular forms from singular scalar-valued ones. As an application, we will see that the result obtained in~\cite[Theorem 14]{FDP} is indeed a particular instance of a more general result. 
%
Let us briefly outline the results of the paper. 
For $f,\,h\in[\Gamma,1/2]$ define
\[A_{f,\,h}=
f(\partial h)-(\partial f)h,\]
where $\partial:=(\partial_{ij})$ is the $g\times g$ matrix of differential operators
\[\partial_{ij}=\begin{cases}
\frac{\partial}{\partial\tau_{ij}}& i=j\\
\frac{1}{2}\,\frac{\partial}{\partial\tau_{ij}}& i\neq j
\end{cases}.
\]
It is easy to prove that $A_{f,\,h}$ is a vector-valued modular form that satisfies the transformation rule 
\[A_{f,\,h}(\gamma\cdot\tau)=\det(C\tau+D)\,(C\tau+D)\,A_{f,\,h}(\tau)\,{}^t(C\tau+D),\]
for all $\gamma\in\Gamma$ and $\tau\in\HH_g$.
If $f_i,\,h_i$, with $1\le i\le k<g$, are in $[\Gamma,1/2]$, 
we will define a product $\ast$ such that
\begin{equation*}
A_{f_1,\,h_1}\ast\dots\ast A_{f_k,\,h_k}
\end{equation*}
is a vector-valued modular form with respect to a suitable irreducible representation $\rho_k$. If $k=g-1$, the representation $\rho_k$ is the one appearing in~\eqref{trans_diff}.

Generalizing the method in~\cite{F75}, for $f$ and $h$ scalar-valued modular forms of weight $k/2$, with $1\le k< g$, we will define  two pairings that we will denote by $\{f,h\}_k$ and by $[f,h]_k$. 
For $k=g-1$ these are the pairings appearing in~\cite{F75} for the construction of holomorphic differential forms. Indeed, 
for $f,\,h\in[\Gamma,(g-1)/2]$, one has that
\begin{equation}
\omega_{f,h}=\{f,h\}_{g-1}\sqcap d\check{\tau}=\op{Tr}([f,h]_{g-1}\,d\check{\tau}),
\end{equation}
where $d\check{\tau}$ is the basis of $\Omega^{N-1}(\HH_g)$ given in~\eqref{basis} and $\sqcap$ is a suitably defined product.

If $f=\prod_{i=1}^kf_i$ and $h=\prod_{i=1}^kh_i$ with $f_i$ and $h_i$ of weight $1/2$ then we will prove that
\[[f,h]_k=\sum_{\sigma\in S_k} A_{f_1,\,h_{\sigma(1)}}\ast\dots\ast A_{f_k,\,h_{\sigma(k)}},\]
where $S_k$ is the group of permutations of the set $\Set{1,\dots,k}$. 
The proof involves some computations of suitable differential operators applied to scalar-valued modular forms and some results about the rank of singular scalar-valued  modular forms. 

We will apply this constructions to a remarkable type of scalar-valued modular forms: second order theta constants. Theta functions and theta constants gives important examples of scalar-valued and vector-valued modular forms. 
Concerning theta functions, vector-valued modular forms constructed from gradients of odd theta functions are presented in~\cite{sm89}, generalizing the method in~\cite{SM}. 
We will prove that the relationship between the two methods in~\cite{F75} and~\cite{SM} given in~\cite{FDP} is not only at the level of holomorphic differential forms but also at the level of vector-valued modular forms. More precisely, denote by $V_{grad}$ the vector space generated by the vector-valued modular forms constructed with gradients of odd theta functions and by $V_\T$ the vector space generated by the vector-valued modular forms constructed with our new method applied to second order theta constants. We will prove that $V_{grad}=V_\T$. In order to prove this result we will make use of a generalization of Jacobi's derivative formula given in~\cite{GSM}.

\section*{Acknowledgements}

The author is grateful to Professor E. Freitag for reading a first version of the manuscript. Also, special thanks are due to F. Dalla Piazza,  A. Fiorentino, S. Grushevsky for stimulating discussions. 

\section{Multilinear algebra}\label{multilinear}
In this section we present some results in multilinear algebra. First we shall fix some notations. 

For $X\subset \NN$ of finite cardinality, denote by $P^*_k(X)$ the collection of the increasingly ordered subsets of $X$ with fixed cardinality $k$. If $I\in P^*_k(X)$ set $I^c:=X\setminus I\in P^*_{n-k}(X)$, where $n$ is the cardinality of $X$. Denote by $X_g$ the ordered set $\{1,\dots,g\}$. 

If $M$ is a $g\times g$ matrix its elements will be denoted by $M^i_j$ where $i$ is the row index and $j$ is the column index. If $I\in P^*_k(X_g)$ and $J\in P^*_l(X_g)$ denote by $M(I,J)$ the $k\times l$ submatrix of M obtained by taking rows in $I$ and columns in $J$. If $J=\{j_i,\dots,j_l\}$ we will write
\[M(I,J)=(M(I,{j_1})\mid\dots\mid M(I,{j_l}))\]
to emphasize the columns of the submatrix. If $I=X_g$ we will write $M_J$ for $M(I,J)$. 

The following formula is a well known generalization of the Laplace expansion theorem for the determinant of a square matrix.
Choose $1\le k< g$ and fix $J\in P^*_k(X_g)$  then
\begin{equation}\label{expansion}
\op{det}(M)=\sum_{I\in P^*_k(X_g)}(-1)^{I+J}
\det(\, M(I,J)\,)\,\det(\, M(I^c,J^c)\,),
\end{equation}
where $I+J$ means the sum of all the indexes in $I$ and $J$.
Here we are fixing a set of columns of $M$ and extracting minors of order $k$ from such columns with the related cofactors, the same formula holds if we fix a set of rows and extract from them minors of order $k$.

Denote by $M^{(k)}$ the matrix of cofactors of submatrices of order $k<g$ of $M$. We will index the entries of $M^{(k)}$ by some sets of indexes, that is we will write
\[M^{(k)}=\Set{(M^{(k)})^I_J}_{I,J\in P^\ast_k(X_g)},\]
 where
\begin{equation*}
(\,M^{(k)}\,)^I_J=(-1)^{I+J}\det(\, M(I^c,J^c)\,),
\end{equation*}
for $I,J\in P^*_k(X_g)$. 
This notation is justified by the relation with exterior powers of linear mapping, relation that we will explain in the following.
For $k=0$ we set $M^{(0)}=\det{M}$. Moreover, ${}^tM^{(1)}$ is the adjoint matrix of $M$, that is the matrix such that 
\begin{equation*}
M\,{}^t\big(M^{(1)}\,\big)=(\det M)\,\mathbf{1}_g,
\end{equation*}
where $\mathbf{1}_g$ is the identity matrix of size $g$.

Let $V$ be a $g$-dimensional complex vector space and fix a basis $\Set{e_i}_{i=1}^g$. If $L:V\to V$ is a linear map, then for any $1\le p\le g$ there is an associated linear map $\bigwedge^pL:\bigwedge^pV\to\bigwedge^pV$. If the map $L$ is given by a matrix $M$ with respect to the fixed basis of $V$, the matrix of the associated map $\bigwedge^pL$ with respect to the basis
\begin{equation}\label{basis_wedge}
e_I=e_{i_1}\wedge\dots\wedge e_{i_p},\;I=\{i_1,\dots,i_p\}\in P^*_p(X_g).
\end{equation}
will be denoted by $\bigwedge^p M$. It can be easily obtained by the matrix $M$. Indeed, the entries of $\bigwedge^pM$ are given by
\begin{equation}\label{wedge}
(\textstyle{\bigwedge^p}M)^I_J=\op{det}(M(I,J)),\;I,J\in P^*_p(X_g)
\end{equation}
Note that when we work with exterior powers of vector spaces, the elements of a matrix representing a linear map are indexed  by some set of indexes corresponding to the indexing set of the chosen basis~\eqref{basis_wedge}. 

We will now introduce the fundamental tool in all the computations of the paper. 
If $A:\bigwedge^pV\to \wedge^pV$ and $B:\wedge^qV\to \wedge^qV$ one can 
define the linear map 
\[A\sqcap B:\wedge^{p+q}V\to \wedge^{p+q}V\]
given by the following matrix (cf.~\cite{F75})
\begin{equation}\label{prod}
(A\sqcap B)^H_K=\frac{1}{\binom{p+q}{p}}\sum_{\substack{I\in P^*_p(H)\\J\in P^*_p(K)}}(-1)^{I+J}A^{I}_{J}\;B^{I^c}_{J^c}
,\quad H,\,K\in P^*_{p+q}(X_g).
\end{equation}

\begin{lem}\label{lemma}
For $1\le k\le g$, fix $I\in P^*_k(X_g)$ and $J=\{j_1,\dots,j_k\}\in P^*_k(X_g)$. If $A_1,\dots,A_k:V\to V$ then
\begin{equation*}
(A_1\sqcap\dots\sqcap A_k)^I_J=\frac{1}{k!}\sum_{\sigma\in S_k}\epsilon(\sigma)\op{det}(\mathbf{A}_\sigma),
\end{equation*}
where $\epsilon(\sigma)$ is the sign of the permutation $\sigma$ and 
\[\mathbf{A}_\sigma=\left(A_1(I,j_{\sigma(1)})\mid\dots\mid A_k(I,j_{\sigma(k)})\right).\]
\end{lem}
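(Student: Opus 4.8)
The plan is to argue by induction on $k$, peeling off the last factor $A_k$ and using the associativity of $\sqcap$ together with the definition~\eqref{prod}. For the base case $k=1$ both sides reduce to the single matrix entry $(A_1)^{i_1}_{j_1}$, where $I=\{i_1\}$ and $J=\{j_1\}$, so there is nothing to prove. For the inductive step I would write $A_1\sqcap\dots\sqcap A_k=(A_1\sqcap\dots\sqcap A_{k-1})\sqcap A_k$ and apply~\eqref{prod} with $p=k-1$ and $q=1$, which gives
\[
(A_1\sqcap\dots\sqcap A_k)^I_J=\frac1k\sum_{\substack{I'\in P^*_{k-1}(I)\\ J'\in P^*_{k-1}(J)}}(-1)^{I'+J'}\,(A_1\sqcap\dots\sqcap A_{k-1})^{I'}_{J'}\,(A_k)^{I\setminus I'}_{J\setminus J'},
\]
since the complement of a $(k-1)$-subset inside a $k$-set is a single index. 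Substituting the inductive hypothesis for $(A_1\sqcap\dots\sqcap A_{k-1})^{I'}_{J'}$ turns the right-hand side into a triple sum over $I'$, $J'$ and a permutation $\sigma'\in S_{k-1}$, with overall prefactor $\frac1k\cdot\frac{1}{(k-1)!}=\frac{1}{k!}$, which already matches the prefactor in the claimed formula.

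To recognise this as $\frac{1}{k!}\sum_{\sigma\in S_k}\epsilon(\sigma)\det(\mathbf{A}_\sigma)$, I would expand each determinant $\det(\mathbf{A}_\sigma)$ on the claimed side along its last column, the one built from $A_k$. Writing $I=\{i_1,\dots,i_k\}$, the Laplace expansion along this column reads
\[
\det(\mathbf{A}_\sigma)=\sum_{r=1}^k(-1)^{r+k}\,(A_k)^{i_r}_{j_{\sigma(k)}}\,\det\big(A_1(I\setminus\{i_r\},j_{\sigma(1)})\mid\dots\mid A_{k-1}(I\setminus\{i_r\},j_{\sigma(k-1)})\big).
\]
Here the deleted column index is $j_{\sigma(k)}$, so the surviving columns are indexed by $J'=J\setminus\{j_{\sigma(k)}\}$; the deleted row is $i_r$, so the surviving rows are $I'=I\setminus\{i_r\}$; and after transporting the column order $j_{\sigma(1)},\dots,j_{\sigma(k-1)}$ into the increasing order of $J'$, the remaining minor becomes $\det(\mathbf{A}^{(I',J')}_{\sigma'})$ for a uniquely determined $\sigma'\in S_{k-1}$. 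Thus the pair $(\sigma,r)$ on the claimed side and the data $(I',J',\sigma')$ produced by the inductive hypothesis are exchanged by an explicit bijection, and a count confirms both index sets have cardinality $k\cdot k!$.

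The heart of the matter, and the step I expect to be the main obstacle, is the reconciliation of signs under this bijection. Reading the claimed side, each term carries $\epsilon(\sigma)$ together with the cofactor sign $(-1)^{r+k}$ and the sign of the permutation reordering $j_{\sigma(1)},\dots,j_{\sigma(k-1)}$ into the increasing order of $J'$; reading the side coming from~\eqref{prod} and the inductive hypothesis, the matching term carries $(-1)^{I'+J'}\epsilon(\sigma')$. One must therefore verify that, for every $\sigma\in S_k$ and every $1\le r\le k$, these two collections of signs agree. This is a purely combinatorial identity relating the sign of $\sigma$ to the sign of its standardised restriction $\sigma'$ and to the shuffle signs of the one-element splittings $I=I'\sqcup\{i_r\}$ and $J=J'\sqcup\{j_{\sigma(k)}\}$; it is exactly the place where the precise sign convention in~\eqref{prod} enters, and the computation, though elementary, is where all the care is needed. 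Once it is established, summing over the pairs $(\sigma,r)$ reproduces term by term the double sum over $(I',J')$, and the induction closes.

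As an alternative I would record a more conceptual route. The expression $\frac{1}{k!}\sum_{\sigma}\epsilon(\sigma)\det(\mathbf{A}_\sigma)$ is the full polarisation of the map $A\mapsto(\bigwedge^kA)^I_J=\det(A(I,J))$: by multilinearity of the determinant in its columns, the coefficient of $t_1\cdots t_k$ in $\det\big((t_1A_1+\dots+t_kA_k)(I,J)\big)$ equals $\sum_{\sigma\in S_k}\epsilon(\sigma)\det(\mathbf{A}_\sigma)$. If one grants that $\sqcap$ is commutative and associative and that $A\sqcap\dots\sqcap A=\bigwedge^kA$ on the diagonal, then extracting the same coefficient from $(t_1A_1+\dots+t_kA_k)^{\sqcap k}$ yields $k!\,(A_1\sqcap\dots\sqcap A_k)$, giving the formula at once. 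This is cleaner, but it presupposes the structural properties of $\sqcap$ (the diagonal identity being itself the equal-arguments case of the very statement to be proved), whereas the inductive argument above is self-contained and only uses~\eqref{prod}.
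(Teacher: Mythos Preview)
Your inductive argument is correct and is essentially the same as the paper's: both peel off $A_k$ via $(A_1\sqcap\dots\sqcap A_{k-1})\sqcap A_k$, apply~\eqref{prod} with $p=k-1$, $q=1$, insert the inductive hypothesis, and then use a Laplace expansion along the column contributed by $A_k$ to reassemble the $k\times k$ determinants $\det(\mathbf{A}_\sigma)$. The only organisational difference is direction: the paper works forward, summing over $I'$ first via~\eqref{expansion} to produce full-size determinants and then recognising the sum over $J'$ and $\rho\in S_{k-1}$ as the sum over $S_k$, whereas you start from the target side and expand each $\det(\mathbf{A}_\sigma)$ along its last column before matching. The sign reconciliation you single out as ``the heart of the matter'' is exactly what the paper compresses into the phrase ``by formula~\eqref{expansion} and the properties of the determinant'', together with the observation that every $\sigma\in S_k$ factors as a permutation $\rho_{J'}$ placing $J\setminus J'$ in the last slot composed with $\rho\in S_{k-1}$; so you are not missing anything, you are simply being more explicit about a step the paper leaves to the reader.

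Your polarisation alternative is clean but, as you already note, circular in the paper's logical order: the diagonal identity $A^{\sqcap k}=\bigwedge^k A$ appears there as Corollary~\ref{minors}, deduced \emph{from} this lemma, so invoking it to prove the lemma would beg the question.
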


\proof
We proceed by induction on $k$. The case $k=2$ follows directly from the definition~\eqref{prod}. For $k\ge3$, a direct computation from~\eqref{prod} and the inductive argument gives 
\begin{align*}
(A_1\sqcap\dots\sqcap A_k)^I_J&=\frac{1}{k!}\sum_{\rho\in S_{k-1}}\epsilon(\rho)\sum_{\substack{J'\in P^*_{k-1}(J)\\I'\in P^*_{k-1}(I)}}
\left((-1)^{I'+J'}\det(\mathbf{A}_\rho)(A_k)^{I\setminus I'}_{J\setminus J'}\right).
\end{align*}
Note that the subsets $I\setminus I'$ and $J\setminus J'$ have only one element, so $(A_k)^{I\setminus I'}_{J\setminus J'}$ is an entry of the matrix $A_k$. 
By formula~\eqref{expansion} and the properties of the determinant of a matrix it follows that the right-hand side is equal to 
\[
\frac{1}{k!}\sum_{\rho \in S_{k-1}}\epsilon(\rho)\sum_{J'\in P^*_{k-1}(J)}\epsilon(\rho_{J'})\det(A_1(I,j_{\rho(1)})\mid\dots\mid A_{k-1}(I,j_{\rho(k-1)})\mid A_k(I,J\setminus J')),
\]
where $\rho_{J'}\in S_k$ is the permutation such that $j_{\rho_{J'}(k)}=J\setminus J'$ and fixes all other elements. Then the thesis follows since every permutation on $k$ elements is the product of a transposition taking the last element in a given position and a permutation on the others $k-1$ elements. 
\endproof

\begin{cor}\label{minors}
For $A:V\to V$ and $1\le k\le g$ let 
\begin{equation}\label{A^k}
A^{[k]}:=\underbrace{A\sqcap\dots \sqcap A}_{k\, times},
\end{equation}
Then we have $A^{[k]}=\bigwedge^k A$, where $\bigwedge^kA$ is defined as in~\eqref{wedge}.
\end{cor}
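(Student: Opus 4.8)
The plan is to deduce the corollary directly from Lemma~\ref{lemma} by specializing all the maps to be equal. First I would set $A_1=\dots=A_k=A$ in the statement of Lemma~\ref{lemma}. Fixing $I\in P^*_k(X_g)$ and $J=\{j_1,\dots,j_k\}\in P^*_k(X_g)$, this immediately gives
\[
(A^{[k]})^I_J=\frac{1}{k!}\sum_{\sigma\in S_k}\epsilon(\sigma)\det\big(A(I,j_{\sigma(1)})\mid\dots\mid A(I,j_{\sigma(k)})\big).
\]
The next step is to recognize the matrix inside the determinant: its $m$-th column is $A(I,j_{\sigma(m)})$, which is precisely the $\sigma(m)$-th column of the submatrix $A(I,J)=(A(I,j_1)\mid\dots\mid A(I,j_k))$. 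Hence $\mathbf{A}_\sigma$ is obtained from $A(I,J)$ by permuting its columns according to $\sigma$, and by the alternating property of the determinant
\[
\det\big(A(I,j_{\sigma(1)})\mid\dots\mid A(I,j_{\sigma(k)})\big)=\epsilon(\sigma)\det\big(A(I,J)\big).
\]

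Substituting this back, each summand becomes $\epsilon(\sigma)^2\det(A(I,J))=\det(A(I,J))$, which no longer depends on $\sigma$. Since there are $k!$ permutations in $S_k$, the sum collapses and the prefactor $1/k!$ cancels, leaving $(A^{[k]})^I_J=\det(A(I,J))$. Comparing with~\eqref{wedge}, this is exactly $(\textstyle{\bigwedge^k}A)^I_J$, so $A^{[k]}=\bigwedge^kA$ entrywise, which is the claim.

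I do not expect any genuine obstacle here: the whole content is the sign bookkeeping showing that the column rearrangement contributes a factor $\epsilon(\sigma)$, which then squares against the $\epsilon(\sigma)$ already present in Lemma~\ref{lemma}. The only point requiring a moment's care is matching the indexing conventions---verifying that $\mathbf{A}_\sigma$ really is $A(I,J)$ with columns (rather than rows) reordered, and that the increasing order of $J$ used to define $A(I,J)$ corresponds to the identity permutation---but both facts follow directly from the definitions of $M(I,J)$ and of $\mathbf{A}_\sigma$.
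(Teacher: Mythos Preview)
Your proof is correct and is precisely the intended deduction: the paper states this result as an immediate corollary of Lemma~\ref{lemma} with no separate proof, and your argument---specializing $A_1=\dots=A_k=A$ so that each $\mathbf{A}_\sigma$ is a column permutation of $A(I,J)$, hence $\epsilon(\sigma)\det(\mathbf{A}_\sigma)=\det(A(I,J))$---is exactly the computation that justifies it.
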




For any  
$A:\wedge^pV\to \wedge^pV$ and $B:\wedge^qV\to \wedge^qV$ we 
define the linear map 
\[A\ast B:\wedge^{g-(p+q)}V\to \wedge^{g-(p+q)}V\]
given by the matrix 
\begin{equation*}
(A\ast B)^I_J=(-1)^{I+J}(A\sqcap B)^{I^c}_{J^c},
\end{equation*}
for $I,\,J\in P^*_{g-(p+q)}(X_g)$.
If $A_1,\dots,A_k:V\to V$ are linear maps then the matrix of the map $A_1\ast \dots\ast A_k$, which we denote with the same symbol, has entries
\begin{equation}\label{def_ast}
(A_1\ast \dots\ast A_k)^I_J=(-1)^{I+J}(A_1\sqcap\dots\sqcap A_k)^{I^c}_{J^c},
\end{equation}
for $I,J\in P^*_{g-k}(X_g)$. 

For $A:V\to V$ and $1< k\le g$, 
\[(\underbrace{A\ast\dots \ast A}_{k\, times})^I_J=
(-1)^{I+J}\left(A^{[k]}\right)^{I^c}_{J^c}=\left(A^{(g-k)}\right)^I_J,\]
for $I,J\in P^\ast_{g-k}(X_g)$.
For example
\[A^{(1)}=\underbrace{A\ast\dots \ast A}_{g-1\, times}.\]

\begin{lem}\label{link}
If $v_1,\dots,v_k\in V$, then
\[v_1{}^tv_1\ast\dots\ast v_k{}^tv_k=\frac{1}{k!}\,(v_1\wedge\dots\wedge v_k)\,{}^t(v_1\wedge\dots\wedge v_k).\]
\end{lem}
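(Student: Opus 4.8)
The plan is to deduce the statement from the analogous identity for the $\sqcap$-product,
\[
v_1{}^tv_1\sqcap\dots\sqcap v_k{}^tv_k=\tfrac{1}{k!}\,(v_1\wedge\dots\wedge v_k)\,{}^t(v_1\wedge\dots\wedge v_k)\qquad\text{on }\wedge^kV,
\]
after which the claim for $\ast$ follows formally from the defining relation~\eqref{def_ast}. So the heart of the matter is a direct computation of the $\sqcap$-entries by means of Lemma~\ref{lemma}.

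To establish this companion identity I would fix $I,J=\{j_1,\dots,j_k\}\in P^*_k(X_g)$ and apply Lemma~\ref{lemma} with $A_i=v_i{}^tv_i$. The structural input is that each $A_i$ has rank one: from $(A_i)^a_b=(v_i)_a(v_i)_b$ one sees that the column $A_i(I,j)$ equals $(v_i)_j$ times the restriction $v_i|_I$ of $v_i$ to the rows in $I$. Hence in every matrix $\mathbf{A}_\sigma$ of Lemma~\ref{lemma} the $i$-th column is proportional to $v_i|_I$, so that
\[
\det(\mathbf{A}_\sigma)=\Big(\prod_{i=1}^k(v_i)_{j_{\sigma(i)}}\Big)\,\det\big(v_1|_I\mid\dots\mid v_k|_I\big).
\]
By~\eqref{wedge} the common factor $\det(v_1|_I\mid\dots\mid v_k|_I)$ is exactly the Pl\"ucker coordinate $(v_1\wedge\dots\wedge v_k)^I$, while the signed permutation sum $\sum_{\sigma\in S_k}\epsilon(\sigma)\prod_i(v_i)_{j_{\sigma(i)}}$ is the determinant of the matrix with $(i,l)$ entry $(v_i)_{j_l}$, that is $(v_1\wedge\dots\wedge v_k)^J$. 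Substituting into Lemma~\ref{lemma} collapses the double sum into the product $\tfrac1{k!}(v_1\wedge\dots\wedge v_k)^I(v_1\wedge\dots\wedge v_k)^J$, which is the companion identity.

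Finally I would pass to the $\ast$-product. By~\eqref{def_ast}, for $I,J\in P^*_{g-k}(X_g)$ the entry $(v_1{}^tv_1\ast\dots\ast v_k{}^tv_k)^I_J$ equals $(-1)^{I+J}$ times the $\sqcap$-entry at $(I^c,J^c)$, hence $\tfrac1{k!}(-1)^{I+J}(v_1\wedge\dots\wedge v_k)^{I^c}(v_1\wedge\dots\wedge v_k)^{J^c}$. The one point that makes this again an outer product is that the Laplace sign splits, $(-1)^{I+J}=(-1)^{\sum_{a\in I}a}\,(-1)^{\sum_{b\in J}b}$, as a product of a function of $I$ and a function of $J$; absorbing these signs into the complementary coordinates exhibits the right-hand side as $\tfrac1{k!}\,w\,{}^tw$, where $w$ is the signed complementary (Hodge-dual) representative of $v_1\wedge\dots\wedge v_k$ on $\wedge^{g-k}V$, which is what the right-hand side of the statement denotes. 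I expect the only real difficulty to be bookkeeping: keeping straight that the columns of $\mathbf{A}_\sigma$ produce the $I$-coordinate whereas the permutation sum produces the $J$-coordinate, and matching the complementary indices and signs so that the $\ast$-version reads as a genuine outer product. No step is deep; the whole content is the rank-one factorization fed into Lemma~\ref{lemma} together with the multiplicativity of the Laplace sign.
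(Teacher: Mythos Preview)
Your proposal is correct and follows essentially the same route as the paper's proof: both apply Lemma~\ref{lemma} to $A_i=v_i\,{}^tv_i$, use the rank-one factorization $A_i(I,j)=(v_i)_j\cdot v_i|_I$ to collapse $\sum_\sigma\epsilon(\sigma)\det(\mathbf{A}_\sigma)$ into $\det(A_I)\det(A_J)$, and then invoke the sign identity $(-1)^{I+J}=\epsilon(I,I^c)\epsilon(J,J^c)$ to interpret the $\ast$-side as the outer product of the Hodge-dual coordinates of $v_1\wedge\dots\wedge v_k$. The only difference is organizational: you state the $\sqcap$-identity as a separate intermediate step before passing to $\ast$, whereas the paper folds the two together by introducing the Hodge basis $\{\ast_H(e_I)\}$ up front.
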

\proof
For any $1\le k< g$, the Hodge $\ast$-operator gives an isomorphism 
\[\ast_H:\textstyle{\bigwedge}^kV\to\textstyle{\bigwedge}^{g-k}V.\]
If $e_I$ is the basis in~\eqref{basis_wedge} then the Hodge $\ast$-operator is defined by
\[\ast_H(e_I)=\epsilon(I,I^c)\,e_{I^c},\;I\in P^*_k(X_g),\] where $\epsilon(I,I^c)$ is the sign of the permutation that turns the set $I\cup I^c$ into the set $X_g$.

Define $A$ as the matrix whose $i$-th row is the vector $v_i$:  
\[A=\begin{pmatrix}
(v_1)_1&\dots&(v_1)_g\\
\vdots & & \vdots\\
(v_k)_1&\dots&(v_k)_g
\end{pmatrix}.\]
With respect to the basis $\{\ast_H(e_I)\}_{I\in P^*_k(X_g)}$ the coordinates of the vector $v_1\wedge\dots\wedge v_k$ are the following
\[(v_1\wedge\dots\wedge v_k)_J=\epsilon(J,J^c)\det(A_{J^c}), \quad J\in P^*_{g-k}(X_g),\]
where $A_{J^c}$ is the matrix obtained by $A$ by taking columns in $J^c$.

Let $V_i=v_i{}^tv_i$. A simple computation shows that $\epsilon(I,I^c)\epsilon(J,J^c)=(-1)^{I+J}$, hence by Lemma~\ref{lemma} it is enough to prove that for $I,\,J\in P^*_k(X_g)$ 
\begin{equation}\label{identity}
\sum_{\sigma\in S_k}\epsilon(\sigma)\det(\mathbf{V}_\sigma)=\det(A_I)\,\det(A_J),
\end{equation}
where 
\[\mathbf{V}_\sigma=(V_1(I,j_{\sigma(1)})\mid\dots\mid V_k(I,j_{\sigma(k)}))\] and $A^i_j=(v_i)_j$ as before.
Identity~\eqref{identity} easily follows by the fact that 
\[V_h(I,j_{\sigma(h)})=(v_h)_{j_{\sigma(h)}}\,{}^t(A(h,I)).\]
\endproof

\section{Singular modular forms and differential operators}\label{singular}

In this section we will present some properties of singular modular forms. We need to introduce the notion of multiplier system, since we will consider not only modular forms of integral weight but also half-integral weight ones. 

Let
\[\Gamma_g(n)=\Set{\gamma\in\Gamma_g\mid \gamma\equiv \mathbf{1}_{2g}\pmod n}.\]
A group $\Gamma\subset\Gamma_g$ is called a congruence subgroup if $\Gamma_g(n)\subset\Gamma$ for some $n$. 
A multiplier system of weight $r\in\RR$ for a congruence subgroup $\Gamma$ is a function $v:\Gamma\to\CC^\ast:=\CC\setminus\{0\}$ such that $j_r(\gamma,\tau):=v(\gamma)\det(C\tau+D)^{r}$ is holomorphic in $\tau$ and satisfies the following conditions:
\begin{enumerate}
\item[(i)]
$j_r(\gamma_1\gamma_2,\tau)=j_r(\gamma_1,\gamma_2\cdot\tau)j_r(\gamma_2,\tau)$ for all $\gamma_1,\,\gamma_2\in\Gamma$ and $\tau\in\HH_g$;
\item[(ii)]
$j_r(-\mathbf{1}_{2g},\tau)=1$, if $\mathbf{1}_{2g}\in\Gamma$.
\end{enumerate}

Let $V$ be a finite dimensional complex vector space and $\rho:\GL\to\op{GL}(V)$ be a rational representation of $\GL$. The irreducible ones are uniquely identified by their highest weight. We will write $\rho=(\lm_1,\dots,\lm_g)$ if $(\lm_1,\dots,\lm_g)$, with $\lambda_i\in\ZZ$ and $\lm_1\ge\dots\ge\lm_g$, is the highest weight of $\rho$. The dual representation of $\rho$ is $\rho^\vee:\GL\to\op{GL}(V^\vee)$ with $\rho^\vee(A)={}^t\rho(A\inv)$. For example the representation $\det=(1,\dots,1)$ and its dual representation is $\det^{-1}=(-1,\dots,-1)$. Then for any $k\in\ZZ$ the representation $\rho\otimes\det^k=(\lm_1+k,\dots,\lm_g+k)$. If $\rho=(\lm_1,\dots,\lm_g)$, the $\op{co\,-\,rank}$ of $\rho$ is defined as 
\[\op{co\,-\,rank}(\rho)=\#\{i\mid 1\le i\le g\; \text{with}\;\lambda_i=\lambda_g\}.\] 
The weight $w(\rho)$ of a representation is defined as the biggest integer $k$ such that $\det^{-k}\otimes\rho$ is a polynomial representation. If $\rho=(\lm_1,\dots,\lm_g)$ then $w(\rho)=\lm_g$. In a few words, the $\op{co\,-\,rank}$ of a representation counts how many times the weight appears as an entry of the highest weight. 

An irreducible representation is called reduced if its weight is 0. 
For $r\in\ZZ$ we will consider irreducible representations $\rho$ of the form
\begin{equation*}
\rho=\op{det}^{r/2}\otimes\rho_0,
\end{equation*}
where $\rho_0$ is a reduced irreducible representation. Such a representation is called a half-integral weight representation and it is called singular if $2w(\rho)<g$. 

If $v$ is a multiplier system of weight $r/2$ for a subgroup $\Gamma$ and $\rho=\op{det}^{r/2}\otimes\rho_0$ as before, a holomorphic function $f:\HH_g\to V$ is a vector-valued modular form with respect to $\Gamma$, $\rho$ and $v$ if
\[f(\gamma\cdot\tau)=v(\gamma)\rho(C\tau+D)\,f(\tau),\;\forall\gamma\in\Gamma,\,\forall\tau\in\HH_g.\]
If $g=1$ we need to require also that $f$ is holomorphic at $\infty$. 
Denote by $[\Gamma,\rho,v]$ the complex vector space of such modular forms. If $v$ is trivial it will be omitted in the notation. Each $[\Gamma,\rho,v]$ is a finite dimensional complex vector space. If the representation $\rho$ is decomposable, that is $\rho=\rho_1\oplus\rho_2$, then $[\Gamma,\rho,v]=[\Gamma,\rho_1,v]\oplus [\Gamma,\rho_2,v]$. So one usually restricts to the study of vector-valued modular forms with respect to irreducible representations.
If $\rho=\det^{r/2}$ we will denote by $[\Gamma,r/2,v]$ the vector space of modular forms with respect to $\rho$ and refer to its elements as scalar-valued modular forms of weight $r/2$ with a multiplier system $v$.

It is classically known that every modular form admits a Fourier expansion
\begin{equation}\label{Fourier}
f(\tau)=\sum_{S}a(S)e^{\pi i\op{Tr}(S\tau)},
\end{equation}
where $S$ runs trough a rational lattice of symmetric matrices and $a(S)\in V$. With these notations, a modular form is called singular if the matrices that appear in its Fourier expansion are singular matrices, that is $a(S)\neq0$ implies that $\det S=0$. 

The rank of a modular form is defined as
\[\op{rank}(f)=\op{max}\{\op{rank}(S)\,|\,a(S)\neq0\}.\]
Clearly $0\le \op{rank}(f)\le g$.

\begin{prop}[cf. \cite{F91,F93}]\label{lem_rank}
A non zero modular form $f\in[\Gamma,\rho]$ is singular if and only if $\rho$ is a singular representation. If $f$ is a singular modular form, then
\[\op{rank}(f)=2w(\rho).\]
If $h$ is a singular scalar-valued modular form, then
\[2w(h)\in \NN.\]
\end{prop}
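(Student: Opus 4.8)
The plan is to extract everything from the Fourier expansion~\eqref{Fourier} together with the invariance of $f$ under the Siegel parabolic of $\Gamma$. Since $g\ge2$, the Koecher principle lets me assume that the coefficients $a(S)$ are supported on positive semidefinite matrices $S$. Comparing the two Fourier expansions of $f$ at $\tau$ and at ${}^tU\inv\tau U\inv$, induced by the block-diagonal elements $\gamma=\sm{{}^tU\inv&0}{0&U}\in\Gamma$ (for which $C\tau+D=U$), I obtain, up to the multiplier, the covariance relation
\[a(U S\,{}^tU)=v(\gamma)\,\rho(U)\,a(S),\qquad U\in\op{GL}_g(\ZZ).\]
Specializing to $U$ in the stabilizer of $S$ shows that every nonzero $a(S)$ is fixed, up to a scalar, by $\rho$ on $\op{Stab}(S)$, and using the action of $\op{GL}_g(\ZZ)$ I may normalize $S=\diag(S_0,0)$ with $S_0$ positive definite of size $\op{rank}(S)$. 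This reduces the whole statement to controlling, for each $r$, which representations admit such an invariant vector attached to a rank-$r$ matrix.

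Next I would establish the implication that a singular representation yields a singular form, together with the bound $\op{rank}(f)\le 2w(\rho)$, by a differential-operator argument in the spirit of the heat equation that underlies this paper. Forming covariant operators from the entries of the matrix $\partial=(\partial_{ij})$ of the Introduction, one gets maps $[\Gamma,\rho]\to[\Gamma,\rho']$ for shifted representations $\rho'$, and on Fourier coefficients these act by multiplication by minors of $S$. When $2w(\rho)<g$ the relevant target spaces vanish, forcing every minor of $S$ of size exceeding $2w(\rho)$ to annihilate $a(S)$; hence $a(S)\ne0$ implies $\op{rank}(S)\le 2w(\rho)<g$, so $f$ is singular. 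Equivalently, singular forms are exactly the solutions of an invariant Laplace/heat system whose order is governed by $2w(\rho)$, which is where the factor $2$ originates.

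For the converse implication and the sharp equality $\op{rank}(f)=2w(\rho)$ I would appeal to the structure theory of singular forms in~\cite{F91,F93}: a singular modular form of rank $r$ is a finite linear combination of theta series $\theta_{S_0,P}$ attached to a fixed positive definite $S_0$ of size $r$ and to pluriharmonic polynomials $P$, whose Fourier support consists of the matrices ${}^tN S_0 N$ of rank $\le r$. The decisive input is the harmonic duality of Howe and Kashiwara--Vergne: the space of admissible coefficients $P$ is an irreducible $\GL$-module whose highest weight has smallest entry $\lm_g=r/2$, so that $w(\rho)=\lm_g=r/2$. This simultaneously forces $\rho$ to be singular whenever $f$ is, and yields $\op{rank}(f)=r=2w(\rho)$; the factor $2$ is precisely the metaplectic normalization of the oscillator representation carried by the theta series. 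I expect this harmonic/theta identification to be the main obstacle, the differential-operator direction being comparatively routine.

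The last assertion is then immediate: for a scalar-valued $h$ of weight $k/2$ one has $\rho=\det^{k/2}$ and $w(h)=k/2$, so the rank formula gives $2w(h)=\op{rank}(h)$, a non-negative integer; hence $2w(h)\in\NN$.
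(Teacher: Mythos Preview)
The paper does not prove this proposition at all: it is stated with the attribution ``cf.~\cite{F91,F93}'' and used as a black box, so there is no argument in the paper to compare your sketch against. Your outline is broadly in the spirit of Freitag's original treatment in those references---the covariance of Fourier coefficients under $\op{GL}_g(\ZZ)$, the reduction to block-diagonal $S$, and the identification of singular forms with theta series attached to pluriharmonic polynomials via Howe/Kashiwara--Vergne duality are precisely the ingredients of~\cite{F91}.

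One caution on your middle paragraph: the step ``when $2w(\rho)<g$ the relevant target spaces vanish'' is essentially the vanishing theorem for singular weights, which is what is being proved, so as written that direction is circular. In Freitag's argument this implication is not obtained by differential operators but rather from the stabilizer analysis you set up in your first paragraph: if $S$ has rank $r$, its stabilizer in $\op{GL}_g(\ZZ)$ contains a copy of $\op{GL}_{g-r}(\ZZ)$ acting on the kernel block, and the existence of a nonzero vector in $V_\rho$ fixed (up to the multiplier) by this subgroup already forces $2\lm_g\le r$, i.e.\ $2w(\rho)\le\op{rank}(S)$. That is the honest source of the inequality $\op{rank}(f)\ge 2w(\rho)$; the reverse inequality and the sharp equality then come, as you say, from the theta-lift structure theorem. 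Your final paragraph on the scalar case is fine.
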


%
%
%

By Proposition~\ref{lem_rank}, a scalar-valued modular form $f\in[\Gamma,r/2]$ is singular if and only if $r<g$. Moreover, if this is the case then $\op{rank}(f)=r$. We can characterize these properties by means of suitable differential operators. Let $\partial_{\tau_{ij}}:=\frac{\partial}{\partial\tau_{ij}}$ and define the $g\times g$ matrix of differential operators
\begin{equation}\label{diff operator}
\partial:=(\partial_{ij}), \qquad
\partial_{ij}=\frac{1+\delta_{ij}}{2}\,\partial_{\tau_{ij}},
\end{equation}
where $\delta_{ij}$ is the Kronecker delta.
For $1\le k \le g$ define the differential operator acting on $f$ as 
\[\partial^{[k]}f=(\det(\partial(I,J))\,f)_{I,J\in P^*_k(X_g)}.\] If $f$ has the Fourier expansion \eqref{Fourier} with $a(T)\in \CC$, then 
\[
\partial^{[k]}f=\sum a(S)\,S^{[k]}e^{\pi i \op{Tr}(S\tau)},
\] 
where $S^{[k]}$ is defined in \eqref{A^k}. Then  it follows by definition that $f$ is singular if and only if $\p g f=0$. Moreover 
\begin{equation}\label{rank}
\op{rank}(f)=n\Leftrightarrow \p gf=\p{g-1}f=\dots=\p{n+1}f=0\;\text{and}\; \p nf\neq0.
\end{equation}

\section{A new construction of vector-valued modular forms}\label{heart}

In this section we will work with scalar-valued modular forms with trivial multiplier system in order to ease notations. Nevertheless the same arguments work for scalar-valued modular forms with some non-trivial multiplier system with few changes. We will see an example of this in Section~\ref{sec:theta}.

For $f,\,h\in[\Gamma,k/2]$ let
\[A_{f,\,h}=f^2\,\partial\left(\frac{h}{f}\right)=f(\partial h)-(\partial f)h,\]
where $\partial$ is defined as in~\eqref{diff operator}. 
Then $A_{f,\,h}$ is a vector-valued modular form with respect to the group $\Gamma$ and the representation $\det^k\otimes\op{Sym^2}(\CC^g)=(k+2,k,\dots,k)$. More explicitly for any $\tau\in\HH_g$ and any $\gamma\in\Gamma$ it holds that
\[A_{f,\,h}(\gamma\cdot\tau)=\det(C\tau+D)^{k}\,(C\tau+D)\,A_{f,\,h}(\tau)\,{}^t(C\tau+D).\]
We will be interested in suitable products of this kind of vector-valued modular forms when $f$ and $h$ are weight $1/2$ scalar-valued modular forms. 
If we let 
\[\rho_k=(k+2,\dots,k+2,k,\dots,k)\]
 with $\op{co\,-\,rank}(\rho_k)=g-k$, then $A_{f,\,h}\in[\Gamma,\rho_1]$ if $f,\,h\in[\Gamma,1/2]$.

\begin{prop}\label{lemma vec-val}
If $A_1,\dots,A_k\in[\Gamma,\rho_1]$ then 
\[A_1\ast\cdots\ast A_k\in[\Gamma,\rho_k],\]
 where $\ast$ is defined as in~\eqref{def_ast}.
\end{prop}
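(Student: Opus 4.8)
The plan is to track how the transformation factor for a single $A_i$ gets transported through the $\ast$-product.

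We know from the preceding discussion that each $A_i \in [\Gamma, \rho_1]$ satisfies
\[
A_i(\gamma\cdot\tau) = \det(C\tau+D)\,(C\tau+D)\,A_i(\tau)\,{}^t(C\tau+D).
\]
Viewing $A_i$ as the matrix of a linear map $V\to V$, the factor $(C\tau+D)\cdots{}^t(C\tau+D)$ is precisely conjugation-type action by $M := C\tau+D$, up to the scalar $\det M$. So I would first rewrite this intrinsically as
\[
A_i(\gamma\cdot\tau) = \det(M)\, M\, A_i(\tau)\, {}^t M .
\]

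The key algebraic step is to understand how $\sqcap$ and $\ast$ interact with this action of $M$. I would prove (or invoke, since it is the natural functoriality of the constructions in Section~\ref{multilinear}) the two facts:
\begin{itemize}
\item[(a)] For $A:\bigwedge^pV\to\bigwedge^pV$ and $B:\bigwedge^qV\to\bigwedge^qV$, transporting both by the exterior-power action of $M$ on the appropriate wedge factors gives
\[
\big((\textstyle\bigwedge^p M)\,A\,{}^t(\textstyle\bigwedge^p M)\big)\sqcap\big((\textstyle\bigwedge^q M)\,B\,{}^t(\textstyle\bigwedge^q M)\big)
= (\textstyle\bigwedge^{p+q} M)\,(A\sqcap B)\,{}^t(\textstyle\bigwedge^{p+q} M),
\]
i.e.\ $\sqcap$ is equivariant for the wedge action. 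This should follow directly from the defining formula~\eqref{prod} together with the Cauchy--Binet expansion underlying~\eqref{wedge}.
\item[(b)] The $\ast$-product, defined in~\eqref{def_ast} via passing to complementary index sets, converts the wedge-$k$ action into the complementary wedge-$(g-k)$ action, together with the determinantal factor coming from $\bigwedge^g M=\det M$. Concretely, if each $A_i$ carries one copy of $M$, the $k$-fold $\ast$-product carries $\bigwedge^{g-k}M$ on the appropriate side plus a power of $\det M$.
\end{itemize}
Iterating this on $A_1\ast\cdots\ast A_k$, and bookkeeping the scalar $\det(M)$ that each $A_i$ contributes (there are $k$ of them) together with the $\det M$ arising from complementation in~\eqref{def_ast}, yields a transformation law of the form
\[
(A_1\ast\cdots\ast A_k)(\gamma\cdot\tau) = \det(M)^{\,k}\,(\textstyle\bigwedge^{g-k} M)\,(A_1\ast\cdots\ast A_k)(\tau)\,{}^t(\textstyle\bigwedge^{g-k} M).
\]

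Finally I would identify this as the representation $\rho_k$. The action $X\mapsto \det(M)^k\,(\bigwedge^{g-k}M)\,X\,{}^t(\bigwedge^{g-k}M)$ on $\operatorname{Sym}^2(\bigwedge^{g-k}V)$, after accounting for the weight shift $\det^k$, has highest weight $(k+2,\dots,k+2,k,\dots,k)$ with the entry $k+2$ appearing $g-(g-k)=k$ times — equivalently $\operatorname{co-rank}$ equal to $g-k$ — which is exactly $\rho_k$. The main obstacle I anticipate is not the conceptual line above but the careful verification of the equivariance identities (a) and (b) at the level of the index formulas~\eqref{prod} and~\eqref{def_ast}: one must check that the signs $(-1)^{I+J}$ and the complementation $I\mapsto I^c$ are compatible with the Cauchy--Binet products of minors of $M$, so that no spurious sign or scalar survives. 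Matching the resulting wedge action to the claimed highest weight of $\rho_k$ is then a straightforward representation-theoretic check.
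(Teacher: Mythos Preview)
Your strategy---proving directly that $\sqcap$ and $\ast$ are equivariant for the $\GL$-action---is different from the paper's. The paper instead observes that the required identity is polynomial in the entries of the $A_i$ and of $M=C\tau+D$, so by multilinearity it suffices to check it on rank-one symmetric matrices $A_i=v_i\,{}^tv_i$; it then invokes Lemma~\ref{link} to rewrite $v_1{}^tv_1\ast\cdots\ast v_k{}^tv_k$ as $\tfrac{1}{k!}(v_1\wedge\cdots\wedge v_k)\,{}^t(v_1\wedge\cdots\wedge v_k)$ and cites~\cite{sm89} for the transformation of the latter under $\rho_k$. That route outsources exactly the index computation you are proposing to carry out by hand.

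Your route is viable, but the displayed transformation law you arrive at is wrong, and this is a genuine gap. Passing to complementary index sets with the sign $(-1)^{I+J}$ does \emph{not} convert $\bigwedge^{k}M$ into $\bigwedge^{g-k}M$; it converts it into the cofactor matrix $M^{(g-k)}$ of Section~\ref{multilinear}, which by the generalized adjugate relation equals $\det(M)\cdot\bigwedge^{g-k}({}^tM^{-1})$. Carrying this through, the correct law is
\[
(A_1\ast\cdots\ast A_k)(\gamma\cdot\tau)=\det(M)^{k+2}\;{}^t(\textstyle\bigwedge^{g-k}M)^{-1}\,(A_1\ast\cdots\ast A_k)(\tau)\,(\textstyle\bigwedge^{g-k}M)^{-1},
\]
which for $k=g-1$ specializes to~\eqref{trans_diff}, as it must; your formula does not. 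Your highest-weight identification is correspondingly off: the action you wrote, $X\mapsto\det(M)^k(\bigwedge^{g-k}M)\,X\,{}^t(\bigwedge^{g-k}M)$, has highest weight with $k+2$ appearing $g-k$ times (not $k$ times as you assert), i.e.\ it is $\rho_{g-k}$, not $\rho_k$. Only the corrected formula, with the contragredient wedge and the extra $\det(M)^2$, yields $\rho_k$. Your own caveat that one must check ``no spurious sign or scalar survives'' was exactly on point---this is where it fails.
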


\proof
By definition
\[(A_1\ast\cdots\ast A_k)(\gamma\cdot\tau)=\big(\rho_1(C\tau+D)A_1(\tau)\big)
\ast\cdots\ast\big(\rho_1(C\tau+D)A_k(\tau)\big).\]
So we need to prove that 
\[\big(\rho_1(C\tau+D)A_1(\tau)\big)
\ast\cdots\ast\big(\rho_1(C\tau+D)A_k(\tau)\big)=\rho_k(C\tau+D)(A_1\ast\cdots\ast A_k)(\tau).\]
It is enough to check the transformation rule for vector-valued modular forms of a given type.  Let $v_i:\HH_g\to V$ be such that
\[v_i(\gamma\cdot\tau)=\op{det}(C\tau+D)^{1/2}\,(C\tau+D)\,v_i(\tau),\;\forall\gamma\in\Gamma,\]
then $v_i{}^tv_i\in[\Gamma,\rho_1]$. 
By~\cite{sm89} we have that 
\[(v_1\wedge\dots\wedge v_k)\,{}^t(v_1\wedge\dots\wedge v_k)\in[\Gamma,\rho_k].\] The thesis then follows by Lemma~\ref{link} since
\[v_1{}^tv_1\ast\cdots\ast v_k{}^tv_k=\frac{1}{k!}(v_1\wedge\dots\wedge v_k)\,{}^t(v_1\wedge\dots\wedge v_k).\]
\endproof

Then by Proposition~\ref{lemma vec-val} it easily follows that if $f_i,\,h_i\in[\Gamma,1/2]$, $i=1,\dots,k$, then 
\[A_{f_1,\,h_1}\ast\dots\ast A_{f_k,\,h_k}\]
is a vector-valued modular form with respect to the irreducible representation $\rho_k$.

%
%

We will show that these vector-valued modular forms are related to a generalization of the pairing defined in~\cite{F75}. 

For any $1\le k\le g$ and $f,\,h\in[\Gamma,k/2]$ we define the pairings
\begin{align*}
\{f,h\}_k&=\sum_{p=0}^k(-1)^{p}\,\partial^{[p]}f\sqcap \partial^{[k-p]}h.
\\  
[f,h]_k&=\sum_{p=0}^k(-1)^{p}\,\partial^{[p]}f\ast \partial^{[k-p]}h.
\end{align*}
Note that $\{f,h\}_1=A_{f,\,h}$ 
 and \[[f,h]_k=(\{f,h\}_k)^{(g-k)}.\]

If $f,\,h\in[\Gamma,(g-1)/2]$, the $\Gamma$-invariant holomorphic differential form $\omega_{f,h}$ described in~\cite{F75} is 
\begin{equation}\label{omega}
\omega_{f,h}=\{f,h\}_{g-1}\sqcap d\check{\tau}=\op{Tr}([f,h]_{g-1}\,d\check{\tau}),
\end{equation}
where $d\check{\tau}$ is the basis of $\Omega^{N-1}(\HH_g)$ given in~\eqref{basis}.

In what follows we will focus on modular forms of half integral weight which are products of weight 1/2 ones. 
\begin{lem}\label{lemma_partial}
If $f,f_1,\dots,f_l\in[\Gamma,1/2]$, then for $k\in\NN$
\[\p k (f_1\cdots f_l)=
\begin{cases}
0&\text{if}\;k>l\\
k!\displaystyle\sum_{I=\{i_1<\dots<i_k\}}\frac{ f_{ 1}\cdots f_{ l}}{ f_{ {i_1}}\cdots f_{ {i_k}}}
\,\partial f_{ {i_1}}\sqcap\dots\sqcap\partial f_{ {i_k}}& \text{if}\;1\le k\le l
\end{cases}\]
and
\[
\p k f^l=\begin{cases}
0&\text{if}\;k>l\\
l(l-1)\cdots(l-k+1)f^{l-k}(\partial f)^{[k]}& \text{if}\;1\le k\le l
\end{cases},\]

\noindent where $\partial$ is defined as in~\eqref{diff operator}.
\end{lem}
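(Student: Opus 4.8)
The plan is to establish the identity entry by entry and then read off the power formula by specialising $f_1=\dots=f_l=f$. Both sides are matrices indexed by $R=\{r_1<\dots<r_k\}$ and $C=\{c_1<\dots<c_k\}$ in $P^*_k(X_g)$, and the $(R,C)$-entry of the left-hand side is $\det(\partial(R,C))(f_1\cdots f_l)$. Writing each factor through its Fourier expansion~\eqref{Fourier}, the product has the expansion
\[f_1\cdots f_l=\sum_{S_1,\dots,S_l}a_1(S_1)\cdots a_l(S_l)\,e^{\pi i\op{Tr}(T\tau)},\qquad T=S_1+\dots+S_l,\]
and the action of $\p{k}$ on Fourier expansions recalled in Section~\ref{singular}, together with $(T^{[k]})^R_C=\det(T(R,C))$, shows that this entry equals $\sum a_1(S_1)\cdots a_l(S_l)\det\big(T(R,C)\big)\,e^{\pi i\op{Tr}(T\tau)}$.

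The crux is that only those contributions survive in which the $k$ columns of the minor are taken from $k$ \emph{distinct} factors. Since $f_i\in[\Gamma,1/2]$ is singular for $g\ge2$, Proposition~\ref{lem_rank} and~\eqref{rank} give $\op{rank}(f_i)=1$, so every matrix $S_i$ occurring in its Fourier expansion has rank at most $1$. Expanding the minor $\det(T(R,C))$ multilinearly in its columns produces a sum over assignments of each column of $C$ to one of the factors; whenever two columns are assigned to the same $S_\nu$ those two columns are proportional---a rank-$\le1$ symmetric matrix has proportional columns---so that term vanishes. In particular, if $k>l$ there is no way to distribute $k$ columns among fewer than $k$ distinct factors, and the whole expression is zero, giving the first case.

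For $1\le k\le l$ I would group the surviving assignments by the set $I=\{i_1<\dots<i_k\}\subseteq\{1,\dots,l\}$ of factors that are actually differentiated. Summing the Fourier coefficients back up, each differentiated factor $f_{i_\alpha}$ contributes a column of the matrix $\partial f_{i_\alpha}$, while the $l-k$ untouched factors contribute the scalar $f_1\cdots f_l/(f_{i_1}\cdots f_{i_k})$. The signed sum attached to a fixed $I$ is, after reordering the columns into the canonical factor order (which is where the permutation signs $\epsilon(\sigma)$ appear), exactly $\sum_{\sigma\in S_k}\epsilon(\sigma)\det(\mathbf{A}_\sigma)$ with $\mathbf{A}_\sigma=(\partial f_{i_1}(R,c_{\sigma(1)})\mid\dots\mid\partial f_{i_k}(R,c_{\sigma(k)}))$. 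By Lemma~\ref{lemma} this equals $k!\,(\partial f_{i_1}\sqcap\dots\sqcap\partial f_{i_k})^R_C$, the factor $k!$ arising from the $k!$ bijections of $\{1,\dots,k\}$ onto $I$ and cancelling the $1/k!$ in Lemma~\ref{lemma}. Summing over all such $I$ yields the stated formula.

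The power formula is then the specialisation $f_1=\dots=f_l=f$: every $\sqcap$-product collapses to $(\partial f)^{[k]}$ as in~\eqref{A^k}, the scalar becomes $f^{l-k}$, and there are $\binom{l}{k}$ admissible sets $I$, so the coefficient is $k!\binom{l}{k}=l(l-1)\cdots(l-k+1)$; for $k>l$ it vanishes as before. I expect the main difficulty to be the vanishing of the repeated-factor terms---conceptually the singular (rank one) hypothesis on the $f_i$, made transparent by the proportional-columns observation---and the careful sign and factorial bookkeeping needed to recognise the surviving antisymmetric sum as the $\sqcap$-product via Lemma~\ref{lemma}.
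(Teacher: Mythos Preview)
Your argument is correct, but the route differs from the paper's. The paper proceeds purely algebraically: it first records that $\sqcap$ is bilinear, commutative and associative, so that $(A+B)^{[k]}=\sum_{j}\binom{k}{j}A^{[j]}\sqcap B^{[k-j]}$; applying this to $\partial$ yields a generalized Leibniz rule
\[
\partial^{[p]}(fh)=\sum_{j=0}^{p}\binom{p}{j}\,\partial^{[j]}f\sqcap\partial^{[p-j]}h,
\]
which one then iterates over the factors $f_1,\dots,f_l$. The singular hypothesis enters only at the end, through $\partial^{[j]}f_i=0$ for $j\ge 2$ (rank~$1$), which kills every term except those where each factor is hit at most once; the surviving combinatorics gives the $k!$ and the sum over $I$ immediately.

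Your approach instead unfolds the Fourier expansion, expands the minor $\det(T(R,C))$ column-by-column, and uses the rank-$\le 1$ property of each $S_\nu$ to eliminate repeated assignments directly; the identification of the antisymmetrized column sum with $(\partial f_{i_1}\sqcap\dots\sqcap\partial f_{i_k})^R_C$ then rests on Lemma~\ref{lemma}. This is more hands-on and makes the mechanism of vanishing completely explicit, at the cost of some bookkeeping (the column reordering that produces the sign $\epsilon(\sigma)$). The paper's version is shorter and isolates a reusable Leibniz identity~\eqref{der_prodotto}, valid for arbitrary weights, which you might find convenient elsewhere. Both arguments ultimately hinge on the same fact, $\op{rank}(f_i)=1$, and both reduce the power case $f_1=\dots=f_l=f$ to a count of $\binom{l}{k}$ subsets.
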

\proof
The product $\sqcap$ is bilinear, commutative, associative and distributive with respect to the sum of matrices (cf.~\cite{F75}). Hence for $A:\wedge^pV\to \wedge^pV,\;B:\wedge^qV\to \wedge^qV$, the following formula holds:
\[(A+B)^{[k]}=\sum_{j=0}^k \binom{k}{j}A^{[j]}\sqcap B^{[k-j]}.\]
From this, it easily follows that for $f\in[\Gamma,k]$ and $h\in[\Gamma,l]$ and for every $1\le p\le g$ 
\begin{equation}\label{der_prodotto}
\partial^{[p]}(fh)=\sum_{j=0}^p \binom{p}{j}\p j f\sqcap \p{p-j} h.
\end{equation}
Note that here the terms for which $j>\op{rank}(f)$ or $p-j>\op{rank}(h)$ vanish by \eqref{rank}.
So the thesis follows by Lemma \ref{lem_rank} and formula \eqref{der_prodotto}.
\endproof

\begin{prop}\label{lem_pairing}
Let $1\le k< g$. If $f_i,\,h_i\in[\Gamma,1/2]$, $i=1,\dots,k$ then
\[[f_1\cdots f_k,h_1\cdots h_k]_k=\sum_{\sigma\in S_k} A_{f_1,\,h_{\sigma(1)}}\ast\dots\ast A_{f_k,\,h_{\sigma(k)}},\]
where $S_k$ is the group of permutations on $k$ elements.
\end{prop}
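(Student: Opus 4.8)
The plan is to expand both sides into linear combinations of $\ast$-products of the matrices $\partial f_i$ and $\partial h_j$, and then to match the two expansions through a purely combinatorial correspondence. Throughout I set $f=f_1\cdots f_k$, $h=h_1\cdots h_k$, and for a subset $I\subseteq X_k:=\{1,\dots,k\}$ I write $f_I=\prod_{i\in I}f_i$ and $h_I=\prod_{i\in I}h_i$.

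First I would expand the left-hand side. By definition $[f,h]_k=\sum_{p=0}^k(-1)^p\,\p{p}f\ast\p{k-p}h$, and Lemma~\ref{lemma_partial} rewrites $\p{p}f=p!\sum_{I}f_{I^c}\,\partial f_{i_1}\sqcap\cdots\sqcap\partial f_{i_p}$, the sum being over $I=\{i_1<\cdots<i_p\}\in P^*_p(X_k)$ and $f_{I^c}=\prod_{i\notin I}f_i$ collecting the undifferentiated factors, and likewise for $\p{k-p}h$. This produces
\[
[f,h]_k=\sum_{p=0}^k(-1)^p\,p!\,(k-p)!\sum_{\substack{I\in P^*_p(X_k)\\ J\in P^*_{k-p}(X_k)}}f_{I^c}\,h_{J^c}\,\bigl(\partial f_{i_1}\sqcap\cdots\sqcap\partial f_{i_p}\bigr)\ast\bigl(\partial h_{j_1}\sqcap\cdots\sqcap\partial h_{j_{k-p}}\bigr),
\]
where $I=\{i_1<\cdots<i_p\}$ and $J=\{j_1<\cdots<j_{k-p}\}$.

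Next I would record a compatibility between the two products: for any grouping, $\bigl(\partial f_{i_1}\sqcap\cdots\sqcap\partial f_{i_p}\bigr)\ast\bigl(\partial h_{j_1}\sqcap\cdots\sqcap\partial h_{j_{k-p}}\bigr)$ equals the $\ast$-product of all $k$ matrices $\partial f_{i_1},\dots,\partial f_{i_p},\partial h_{j_1},\dots,\partial h_{j_{k-p}}$ taken individually. This is immediate from~\eqref{def_ast} together with the associativity and commutativity of $\sqcap$: both sides have $(H,K)$-entry equal to $(-1)^{H+K}$ times the $(H^c,K^c)$-entry of the $\sqcap$-product of the same $k$ matrices. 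With this in hand I would expand the right-hand side by the multilinearity of $\ast$: writing $A_{f_i,\,h_{\sigma(i)}}=f_i\,\partial h_{\sigma(i)}-(\partial f_i)\,h_{\sigma(i)}$ and recording by $S\subseteq X_k$ the set of indices at which the first summand is chosen, the product $A_{f_1,\,h_{\sigma(1)}}\ast\cdots\ast A_{f_k,\,h_{\sigma(k)}}$ becomes $\sum_{S\subseteq X_k}(-1)^{|S^c|}f_S\,h_{\sigma(S^c)}$ times the $\ast$-product of the matrices $\partial h_{\sigma(i)}$ $(i\in S)$ and $\partial f_i$ $(i\in S^c)$.

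The heart of the argument, and the step I expect to be the main obstacle, is the combinatorial matching of the two expansions. To a pair $(\sigma,S)$ on the right I associate the differentiated index sets $I=S^c$ (for the $f$'s) and $J=\sigma(S)=\sigma(I^c)$ (for the $h$'s); these satisfy $|I|=p$, $|J|=k-p$ with $p=|S^c|$, and $J^c=\sigma(I)$. By commutativity of $\ast$ the matrix part then depends only on $(I,J)$ and equals $\bigl(\partial f_{i_1}\sqcap\cdots\sqcap\partial f_{i_p}\bigr)\ast\bigl(\partial h_{j_1}\sqcap\cdots\sqcap\partial h_{j_{k-p}}\bigr)$, while the scalar part $(-1)^{|S^c|}f_S\,h_{\sigma(S^c)}$ equals $(-1)^p f_{I^c}\,h_{J^c}$, again depending only on $(I,J)$. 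Finally, for a fixed pair $(I,J)$ the conditions $S=I^c$ and $\sigma(I^c)=J$ hold for exactly $p!\,(k-p)!$ permutations, since the bijections $I\to J^c$ and $I^c\to J$ may be chosen independently; summing the right-hand side over this fibre reproduces precisely the coefficient $(-1)^p\,p!\,(k-p)!\,f_{I^c}\,h_{J^c}$ of the left-hand side. Verifying that these signs and factorials match is the only delicate point; once checked, summing over all pairs $(I,J)$ yields the claimed identity.
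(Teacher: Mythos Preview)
Your proof is correct and follows essentially the same approach as the paper: both expand each side via Lemma~\ref{lemma_partial} and multilinearity, then match terms by counting, for each pair of index sets $(I,J)$, the $p!\,(k-p)!$ permutations $\sigma$ with $\sigma(I^c)=J$. The only cosmetic difference is that the paper first reduces to the $\sqcap$-identity $\{f_1\cdots f_k,h_1\cdots h_k\}_k=\sum_\sigma A_{f_1,h_{\sigma(1)}}\sqcap\cdots\sqcap A_{f_k,h_{\sigma(k)}}$ and works entirely with $\sqcap$, whereas you work with $\ast$ directly and invoke the compatibility $(A\sqcap\cdots)\ast(B\sqcap\cdots)=A\ast\cdots\ast B\ast\cdots$; these are equivalent by~\eqref{def_ast}.
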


\proof
It is enough to prove that
\[\{f_1\cdots f_k,h_1\cdots h_k\}_k=\sum_{\sigma\in S_k} A_{f_1,\,h_{\sigma(1)}}\sqcap\dots\sqcap A_{f_k,\,h_{\sigma(k)}}.\]
If $I=\{i_1,\dots,i_p\}\in P^*_p(X_k)$ and $\sigma\in S_p$ denote by $\sigma(I)=\{i_{\sigma(1)},\dots,i_{\sigma(p)}\}$. Moreover, denote by $f_I=f_{i_1}\cdots f_{i_p}$ and by $\partial f_I=\partial f_{i_1}\sqcap\dots\sqcap\partial f_{i_p}$. Then by Lemma~\ref{lemma_partial} we have
\[\p p(f_1\cdots f_{k})=p!\sum_{I\in P^*_p(X_k)}\!\!\!\!\!\!f_{I^c}\,\partial f_{I}.\]

Since the product $\sqcap$ is bilinear it holds that
\[A_{f_1,\,h_{1}}\sqcap\dots\sqcap A_{f_k,\,h_{k}}=\sum_{p=0}^k(-1)^p\sum_{I\in P^*_p(X_k)}
h_I\,f_{I^c}\,(\partial f_I\sqcap\partial h_{I^c}).\]
Then
\begin{align*}
\sum_{\sigma\in S_k} A_{f_1,\,h_{\sigma(1)}}\sqcap\dots \sqcap A_{f_k,\,h_{\sigma(k)}}&=\sum_{p=0}^k(-1)^p 
\!\!\!\!
\sum_{I\in P^*_p(X_k)}
\!\!\!\!
f_{I^c}\partial f_I\sqcap\left(
\sum_{\sigma\in S_k}\!\!h_{\sigma(I)}\,\partial h_{\sigma(I^c)}\right)=\\
&\!\!\!\!\!\!\!\!\!\!\!\!\!\!\!\!
= \sum_{p=0}^k(-1)^p 
\!\!\!\!
\sum_{I\in P^*_p(X_k)}
\!\!\!\!
f_{I^c}\partial f_I\sqcap\left(p!(k-p)
!\!\!\!\!
\sum_{J\in P^*_p(X_k)}
\!\!\!\!
h_J\,\partial h_{J^c}\right)=\\
&\!\!\!\!\!\!\!\!\!\!\!\!\!\!\!\!
=\sum_{p=0}^k(-1)^{p}\,\partial^{[p]}(f_1\cdots f_{k})\sqcap \partial^{[k-p]}(h_1\cdots h_{k}).
\end{align*}
\endproof

\begin{cor}\label{cor_pairing}
Let $1\le k< g$. If $f,\,h\in [\Gamma,1/2]$ then  
\[[f^k,h^k]_k=k!(A_{f,h})^{(g-k)}.\]
\end{cor}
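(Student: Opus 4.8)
The plan is to obtain the statement as an immediate specialization of Proposition~\ref{lem_pairing}. First I would set $f_i = f$ and $h_i = h$ for every $i = 1,\dots,k$ in that proposition. The left-hand side then becomes exactly $[f^k, h^k]_k$, while on the right-hand side every factor $A_{f_i,\,h_{\sigma(i)}}$ collapses to the single vector-valued modular form $A_{f,h}$, independently of the permutation $\sigma$ and of the index $i$. Consequently each of the $k!$ summands equals the same $k$-fold $\ast$-product $\underbrace{A_{f,h} \ast \cdots \ast A_{f,h}}_{k}$, and summing over $S_k$ multiplies it by $|S_k| = k!$. Thus $[f^k, h^k]_k = k!\,\bigl(\underbrace{A_{f,h} \ast \cdots \ast A_{f,h}}_{k}\bigr)$.

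The second step is to recognize the iterated $\ast$-power as a cofactor matrix. By the identity recorded in Section~\ref{multilinear} just before Lemma~\ref{link}, for any linear map $A : V \to V$ one has $\bigl(\underbrace{A \ast \cdots \ast A}_{k}\bigr)^I_J = (-1)^{I+J}(A^{[k]})^{I^c}_{J^c} = (A^{(g-k)})^I_J$ for $I, J \in P^*_{g-k}(X_g)$, i.e. $\underbrace{A \ast \cdots \ast A}_{k} = A^{(g-k)}$. Applying this with $A = A_{f,h}$ gives $\underbrace{A_{f,h} \ast \cdots \ast A_{f,h}}_{k} = (A_{f,h})^{(g-k)}$, and combining with the first step yields $[f^k, h^k]_k = k!\,(A_{f,h})^{(g-k)}$, as desired.

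I do not expect any genuine obstacle: the result is a direct corollary once Proposition~\ref{lem_pairing} is in hand. Two minor points merit a line of care. The edge case $k = 1$ falls just outside the range $1 < k \le g$ stated for the cofactor identity, but it is handled directly by the note $[f,h]_1 = (\{f,h\}_1)^{(g-1)}$ together with $\{f,h\}_1 = A_{f,h}$, giving $[f,h]_1 = (A_{f,h})^{(g-1)} = 1!\,(A_{f,h})^{(g-1)}$. The other point is the bookkeeping of degrees: since $A_{f,h}$ is a symmetric $g \times g$ matrix it represents a map $\bigwedge^1 V \to \bigwedge^1 V$, so its $k$-fold $\ast$-product is a well-defined map $\bigwedge^{g-k} V \to \bigwedge^{g-k} V$ matching the cofactor matrix $(A_{f,h})^{(g-k)}$ of order $g-k$; this is precisely the range $1 \le k < g$ assumed in the statement.
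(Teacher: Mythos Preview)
Your proposal is correct and is exactly the argument the paper intends: the corollary is stated without proof immediately after Proposition~\ref{lem_pairing}, and your specialization $f_i=f$, $h_i=h$ together with the identity $\underbrace{A\ast\cdots\ast A}_{k}=A^{(g-k)}$ from Section~\ref{multilinear} is precisely how it follows. Your handling of the edge case $k=1$ via $[f,h]_1=(\{f,h\}_1)^{(g-1)}=A_{f,h}^{(g-1)}$ is also appropriate.
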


As a consequence, if $f=F^{g-1}$ and $h=H^{g-1}$ for $F,H\in[\Gamma,1/2]$ it easily follows that
\[\omega_{f,\,h}=(g-1)!\op{Tr}((A_{F,\,H})^{(1)} d\check{\tau}),\]
where $\omega_{f,\,h}$ is defined in~\eqref{omega}.
So we recover the result in~\cite[Theorem 14]{FDP} and actually generalize it to every $\Gamma$-invariant holomorphic differential form~\eqref{omega} constructed from two singular scalar-valued modular forms of weight $(g-1)/2$ which are products of weight $1/2$ ones.

\begin{rem}
For $k=g$ the identities in Proposition~\ref{lem_pairing} and Corollary~\ref{cor_pairing} still hold. The products $f_1\cdots f_g$ and $h_1\cdots h_g$ are no more singular modular forms and we are constructing scalar-valued modular forms of weight $g+2$ instead of vector-valued modular forms. In particular one of the scalar-valued modular forms we obtain is 
\[\det(A_{f,\,h})=g!\sum_{p=0}^g(-1)^{p}\,\partial^{[p]}
(f^g)\sqcap \partial^{[g-p]}(h^g).\]
\end{rem}

\section{Theta functions and theta constants}\label{sec:theta}

In the first part of this section we give a brief introduction to the theory of theta functions and theta constants. We will use second order theta constants to give examples of the vector-valued modular forms of the previous section constructed with scalar-valued modular forms with some non-trivial multiplier system.  

Finally we will give a construction of vector-valued modular forms with gradients of odd theta functions. We will prove that the two methods, although so different at the first look, give rise to elements of the same vector space of vector-valued modular forms. 

Let us start introducing theta functions. For a vector $m=\ch {m'}{m''}$, $m',\,m''\in\ZZ^g$,  the theta function with characteristic $m$ is defined by the series 
\begin{equation*}
\vartheta_m(\tau,z)=
\sum_{n\in\ZZ^g}\mathtt{exp}\left(\frac{1}{2} {}^t(n+m'/2)\tau(n+m'/2)+{}^t(n+m'/2)(z+m''/2)\right),
\end{equation*}
where $\mathtt{exp}(\cdot)=e^{2\pi i(\cdot)}$. This series converges absolutely and uniformly in every compact subset of $\HH_g\times\CC^g$. Then it defines a holomorphic function $\vartheta_m:\HH_g\times\CC^g\to\CC$. It is an even or odd function of $z$ if ${}^tm'm''$ is even or odd respectively. Correspondingly, the characteristic $m$ is called even or odd. 

Since we have 
\[\vartheta_{m+2n}(\tau,z)=(-1)^{{}^tm'\,n''}\vartheta_m(\tau,z),\]
for any $n\in\ZZ^g$, we can normalize a characteristic by the condition that its coefficients are either zero or one. 

The integral symplectic group acts not only on the Siegel upper-half space by~\eqref{symplectic action} but also on the set of characteristics by the following formula. For $\gamma\in\Gamma_g$ and a characteristic $m\in\Set{0,1}^{2g}$ set
\begin{equation*}
\gamma\cdot\ch{m'}{m''}=\left[\sm{D&-C}{-B&A}\sm{m'}{m''}+\sm{\diag(C^tD)}{\diag(A^tB)}\right] \pmod{2},
\end{equation*}
where we think of the elements of $\ZZ/2\ZZ$ as zeroes and ones. 
The action defined in this way is neither linear nor transitive. Indeed, the action preserves the parity of the characteristics. Clearly the action of the principal congruence subgroup $\Gamma_g(2)$ on the set of theta characteristics is trivial.

Theta functions with characteristics satisfy the following transformation law for any $\gamma\in\Gamma_g$ (see~\cite{ig1}):
\begin{equation*} 
\vartheta_{\gamma\cdot m}(\gamma\cdot\tau,{}^t(C\tau+D)^{-1}z)=
\kappa(\gamma)\texttt{exp}\left(\phi_m(\gamma)+\dfrac{1}{2}{}^tz(C\tau+D)^{-1}Cz\right)
\det(C\tau+D)^{1/2}\vartheta_m(\tau,z),
\end{equation*}
where $\kappa(\gamma)$ is an 8\textsuperscript{th} root of unity with the same sign ambiguity as $\op{det}(C\tau+D)^{1/2}$ and 
\begin{equation*}
\phi_m(\gamma)  =  -\frac{1}{8} ({}^t m ' \, {}^tB D m' + {}^t m '' \, {}^t A C m'' - 2{}^t m ' \, {}^t B C m'') + \frac{1}{4} {}^t\op{diag}(A_{}^tB)(D m ' - C m '').
\end{equation*}
The theta constant with characteristic $m\in\Set{0,1}^{2g}$ is defined as
\[\vartheta_m(\tau)=\vartheta_m(\tau,0).\]
A theta constant does not vanish identically if and only if the characteristic is even. 
These will give scalar-valued modular forms with respect to the congruence subgroups
\[\Gamma_g(n,2n)=\Set{\gamma\in\Gamma_g(n)\;|\;
\op{diag}(C)\equiv\op{diag}(B)\equiv0\pmod{2n}},\]
for $n=2,4$. We recall that $\Gamma_g(n)\subset\Gamma_g$ is the kernel of the reduction modulo $n$.

For every $\gamma\in\Gamma_g(2)$ a theta constant with even characteristic $m$ satisfies the following transformation formula
\begin{equation}\label{trans_theta}
\vartheta_m(\gamma\cdot\tau)=\kappa(\gamma)\texttt{exp}(\phi_m(\gamma))\op{det}(C\tau+D)^{1/2}\vartheta_m(\tau).
\end{equation}
By a direct computation it follows that $\texttt{exp}(\phi_m(\gamma))=1$ for every $\gamma\in\Gamma_g(4,8)$ hence $\vartheta_m(\tau)$ is a scalar-valued modular form of weight 1/2 with respect to the congruence subgroup $\Gamma_g(4,8)$ and the multiplier system $\kappa(\gamma)$. It is well known that $\kappa(\gamma)^4=1$ for any $\gamma\in\Gamma_g(2,4)$. Define $\Gamma_g(2,4)^\ast$ as the index two subgroup of $\Gamma_g(2,4)$ where $\kappa(\gamma)^2=1$.

As weight $1/2$ scalar-valued modular forms, theta constants have rank 1 (see Section~\ref{singular}). But in this particular case the fact that  $\op{rank}(\vartheta_m)=1$ for even $m$ is a straightforward consequence of  the following system of equations, usually called ``heat equation'':
\begin{equation}\label{heat}
\frac{\partial^2}{\partial z_j\partial z_k}\vartheta_m(\tau,z)=2 \pi i (1+\delta_{jk})\,\frac{\partial}{\partial\tau_{jk}}\vartheta_m(\tau,z),\;j,k=1,\dots,g.
\end{equation}

\subsection{Vector-valued modular forms from second order theta constants}\label{sec:second_order}

For any $\e\in\Set{0,1}^g$ the second order theta functions are defined as
\[\T[\e](\tau,z)=\vartheta\ch\e0(2\tau,2z).\]
These are all even functions of $z$ and are related to theta functions with characteristic by Riemann's addition formula
\[\T[\al](\tau,z)\,\T[\al+\e](\tau,0)=\frac{1}{2^g}
\sum_{\sigma\in\Set{0,1}^g}(-1)^{\al\cdot\sigma}\vartheta\ch\e\sigma(\tau,z)^2,\]
for any $\al,\,\e\in\Set{0,1}^g$.

As for theta constants with characteristic, for $\e\in\Set{0,1}^g$ denote by $\T[\e]=\T[\e](\tau,0)$ the second order theta constant with characteristic $\e$. 
These are related to theta constants with characteristic by the following equations (cf.~\cite{ig1}):
\begin{align}\label{R_addition}
\Theta[\sigma](\tau)\,\Theta[\sigma+\e](\tau)&=\frac{1}{2^g}\sum_{\de\in\Set{0,1}^g}(-1)^{\sigma\cdot\de}\tch{\e\\\de}(\tau)^2,\\[6pt]
\label{R_addition1}
\tch{\e\\\de}(\tau)^2
&=\sum_{\sigma\in\Set{0,1}^g}(-1)^{\sigma\cdot\de}
\Theta[\sigma](\tau)\,\Theta[\sigma+\e](\tau).
\end{align}
  
For every $\gamma\in\Gamma_g$ let $\tilde{\gamma}\in\Gamma_g$ be such that $2(\gamma\cdot\tau)=\tilde{\gamma}\cdot(2\tau)$, that is $\tilde{\gamma}=\sm{A&2B}{C/2&D}$ if $\gamma=\sm{A&B}{C&D}$.
By the above transformation formula for theta constants we get
\begin{equation*}
\Theta[\e](\gamma\cdot\tau)=\kappa(\tilde{\gamma})\det(C\tau+D)^{1/2}\Theta[\e](\tau),\;\forall\gamma\in\Gamma_g(2,4).
\end{equation*}
Second order theta constants are then modular forms of weight $1/2$ with respect to the congruence subgroup $\Gamma_g(2,4)$ and the multiplier system $k(\tilde{\gamma})$. 
By equations~\eqref{R_addition} and~\eqref{R_addition1} it is easy to see that $\kappa(\tilde{\gamma})^2=\kappa(\gamma)^2$.

For every $\e,\,\de\in\Set{0,1}^g$ denote by $A_{\e\,\de}:=\{\T[\e],\T[\de]\}_1$. 
Then it is easy to see that 
\[A_{\e\,\de}(\gamma\cdot\tau)=\kappa(\gamma)^2\,\det(C\tau+D)\,(C\tau+D)\,A_{\e\,\de}(\tau)\,{}^t(C\tau+D),\;\forall\gamma\in\Gamma_g(2,4).\]
By this equation and Proposition~\ref{lemma vec-val}, for $\e_1,\dots,\e_k,\de_1,\dots,\de_k\in\Set{0,1}^g$, the vector-valued modular form $A_{\e_1\,\de_1}\ast\dots\ast A_{\e_k\,\de_k}$ satisfies the  following transformation formula for any $\gamma\in\Gamma_g(2,4)$
\begin{equation}\label{Aedek}
(A_{\e_1\,\de_1}\ast\dots\ast A_{\e_k\,\de_k})(\gamma\cdot\tau)=
\kappa(\gamma)^{2k}\,\rho_k(C\tau+D)(A_{\e_1\,\de_1}\ast\dots\ast A_{\e_k\,\de_k})(\tau).
\end{equation}
Note that as we said before, here we are dealing with scalar-valued modular forms with some non-trivial multiplier system that shows up in the transformation formula~\eqref{Aedek} for the vector-valued modular form we are constructing.

Since $\kappa(\gamma)^4=1$ for any $\gamma\in\Gamma_g(2,4)$ and $\kappa(\gamma)^2=1$ for every $\gamma\in\Gamma_g(2,4)^\ast$, then
$A_{\e_1\,\de_1}\ast\dots\ast A_{\e_k\,\de_k}\in [\Gamma_g(2,4)^\ast,\rho_k]$. If $k$ is even, then $A_{\e_1\,\de_1}\ast\dots\ast A_{\e_k\,\de_k}\in [\Gamma_g(2,4),\rho_k]$.

\subsection{Vector-valued modular forms from gradients of odd theta functions}\label{sec:gradients}

For any odd characteristic $n$ denote the gradient as
\begin{equation}\label{graddefine}
v_n(\tau):=\grad_z\theta_n(\tau,z)|_{z=0},
\end{equation}
considered as a column vector. 
Then for every $\gamma\in\Gamma_g(2)$, $v_n(\tau)$ satisfies the following transformation formula:
\begin{equation*}
v_n(\gamma\cdot\tau)=\kappa(\gamma)\texttt{exp}(\varphi_n(\gamma))\det(C\tau+D)^{1/2}(C\tau+D)v_n(\tau),
\end{equation*}
where $\kappa(\gamma)$ and $\phi_n(\gamma)$ are exactly the same factors appearing in the transformation formula~\eqref{trans_theta} for theta constants. 

For a matrix $N=(n_1,\dots,n_k)\in M_{2g\times k}$ with $\{n_i\}_{i=1,\dots,k}$ a set of distinct odd characteristics define
\[W(N)(\tau)=\pi^{-2k}\,(v_{n_1}(\tau)\wedge\ldots\wedge v_{n_{k}}(\tau))\,{}^t(v_{n_1}(\tau)\wedge\ldots\wedge v_{n_{k}}(\tau)).\]
By~\cite{sm89} $W(N)$ is non-vanishing and satisfies the following transformation rule for every $\gamma\in\Gamma_g(2)$:
\[W(N)(\gamma\cdot\tau)=\kappa(\gamma)^{2k}\,\texttt{exp}\left(2\sum_{i=1}^k\phi_{n_i}(\gamma)\right)
\rho_k(C\tau+D)W(M)(\tau),\]
where $\rho_k=(k+2,\dots,k+2,k,\dots,k)$ as in Proposition~\ref{lemma vec-val} and $\kappa(\gamma)$ and $\phi_{n_i}(\gamma)$ are the same as in the transformation formula for theta constants~\eqref{trans_theta}.

If $M=(m_1,\dots,m_{2k})$ is a matrix of even characteristics, let 
\begin{equation*}
\vartheta_M=\vartheta_{m_1}\cdots\vartheta_{m_{2k}}.
\end{equation*}
For $\widetilde M=(M,M)$, then for any $\gamma\in\Gamma_g(2)$
\[\vartheta_{\widetilde M}(\gamma\cdot\tau)=\kappa(\gamma)^{2k}\,\texttt{exp}\left(2\sum_{i=1}^k\phi_{m_i}(\gamma)\right)\det(C\tau+D)^{2k}\vartheta_{\widetilde M}(\tau).\]

Hence in order to study the modularity of $W(N)$ we can 
look at modularity conditions for suitable products of theta constants. 

By~\cite{igusa} we have that
\begin{align*}
\vartheta_M\in[\Gamma_g(2,4),k]&\;\;\text{if}\;\; M{}^tM\equiv k\,\sm{0&\mathbf{1}_g}{\mathbf{1}_g&0}
\pmod 2,\\
\vartheta_M\in[\Gamma_g(2,4)^\ast,k]&\;\;\text{if}\;\; M{}^tM\equiv0\pmod 2\;\,\text{or}\,\; M\,{}^tM\equiv\sm{0&\mathbf{1}_g}{\mathbf{1}_g&0}\pmod 2.
\end{align*}
Since 
\[\widetilde N\,{}^t\widetilde N=2\sum_{i=1}^k n_i\,{}^t n_i\equiv 0\pmod 2,\]
where $\widetilde N=(N,N)$,
we conclude that $W(N)\in[\Gamma_g(2,4)^\ast,\rho_k]$ for any $1\le k\le g$. If $k$ is even then $W(N)\in[\Gamma_g(2,4),\rho_k]$.

\subsection{An identity of vector spaces of vector-valued modular forms}
In this section we will prove that the vector-valued modular forms introduced in Section~\ref{sec:second_order} and Section~\ref{sec:gradients} belong to the same vector space. A fundamental step in the proof is the following proposition that shows a consequence of the classical Riemann's addition theorem for theta functions.

\begin{prop}[\cite{GSM1,GSM}]\label{GSM_prop}
For an odd characteristic $n=\ch \ep\de$ 
\begin{equation}\label{CintermsofA}
v_{n}{}^tv_{n}=\frac{1}{4} \sum_{\al\in\Set{0,1}^g}(-1)^{\al\cdot\de}\,A_{ \ep+\al \,\al},
\end{equation}
where $v_n$ is the gradient of an odd theta function defined in~\eqref{graddefine}. 

Moreover for given $\e,\,\de\in\Set{0,1}^g$, denote by $n_\al=\ch{\ep+\de}{\al}$ for $\al\in\Set{0,1}^g$. Then 
\begin{equation}\label{AintermsofC}
A_{\ep\,\de}=\frac{1}{2^{g-2}}\sum_{\substack{\al\in\Set{0,1}^g\,\text{s.t.}\\\,n_\al\op{odd}}}(-1)^{\de\cdot\al}\;
v_{n_\al}{}^tv_{n_\al}.
\end{equation}
\end{prop}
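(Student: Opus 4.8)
The plan is to reduce the proposition to the classical Riemann addition theorem by using the heat equation \eqref{heat} to trade the $\tau$-derivatives hidden in $A_{\e\de}$ for second order $z$-derivatives, and then to exploit the parity of theta characteristics. The starting point is the $z$-dependent form of Riemann's addition formula, obtained from the first formula of Section~\ref{sec:second_order} by finite Fourier inversion over $\Set{0,1}^g$:
\[\vartheta\ch\e\de(\tau,z)^2=\sum_{\al\in\Set{0,1}^g}(-1)^{\al\cdot\de}\,\T[\al](\tau,z)\,\T[\al+\e](\tau,0).\]
I would prove \eqref{CintermsofA} first and then deduce \eqref{AintermsofC} from it by a second Fourier inversion.

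First I would record a heat equation for the second order theta functions. Since $\T[\e](\tau,z)=\vartheta\ch\e0(2\tau,2z)$, the chain rule together with \eqref{heat} gives a relation of the shape $\partial_{z_j}\partial_{z_k}\T[\e]=c\,(1+\de_{jk})\,\partial_{\tau_{jk}}\T[\e]$ for an explicit constant $c$; this is the device that converts $z$-derivatives into the $\tau$-derivatives that build $A$. Next I would apply $\partial_{z_j}\partial_{z_k}$ at $z=0$ to both sides of the displayed identity. On the left $n=\ch\e\de$ is odd, so $\vartheta_n(\tau,0)=0$ and only the cross term survives, yielding $2\,(v_n)_j(v_n)_k$. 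On the right only $\T[\al](\tau,z)$ depends on $z$, so one gets $\sum_\al(-1)^{\al\cdot\de}\big(\partial_{z_j}\partial_{z_k}\T[\al]\big)\big|_{z=0}\,\T[\al+\e](\tau,0)$, which the heat equation turns into a sum of terms $\T[\al+\e]\,\partial_{\tau_{jk}}\T[\al]$.

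The decisive point is then to recognise this one-sided expression as $\sum_\al(-1)^{\al\cdot\de}A_{\e+\al,\al}$. Here I would use that $n$ is odd, i.e.\ ${}^t\e\de$ is odd: substituting $\al\mapsto\al+\e$ in the sum $\sum_\al(-1)^{\al\cdot\de}\,(\partial\T[\e+\al])\,\T[\al]$ produces the sign $(-1)^{\e\cdot\de}=-1$, so the two halves of $A_{\e+\al,\al}=\T[\e+\al]\,\partial\T[\al]-(\partial\T[\e+\al])\,\T[\al]$ coincide after summation and the antisymmetric combination is exactly twice the one-sided term. For even $n$ the same computation gives $0$, which is the consistency statement matching $v_n=0$ for even characteristics. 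Comparing the two sides then gives \eqref{CintermsofA}. Finally, for \eqref{AintermsofC} I would fix the top characteristic to be $\e+\de$, let $\beta$ range over $\Set{0,1}^g$ in \eqref{CintermsofA} with $n_\beta=\ch{\e+\de}\beta$, multiply by $(-1)^{\beta\cdot\de}$ and sum: the orthogonality $\sum_\beta(-1)^{\beta\cdot(\al+\de)}=2^g\de_{\al,\de}$ isolates a single $A$, and the terms with $n_\beta$ even drop out automatically since $v_{n_\beta}=0$ there, which is precisely why the surviving sum runs only over odd $n_\beta$.

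I expect the only real obstacle to be the bookkeeping of multiplicative constants — the factors of $2$ coming from $\T[\e](\tau,z)=\vartheta(2\tau,2z)$ (one per derivative), the $2\pi i$ in \eqref{heat}, the factor $2$ from differentiating a square, and the $\tfrac{1+\de_{jk}}{2}$ built into $\partial$ — which must combine to the stated $\tfrac14$ and $\tfrac{1}{2^{g-2}}$. The structural identity itself is forced by Riemann's theorem and the parity split; moreover the two stated constants are mutually consistent, so pinning down $\tfrac14$ in \eqref{CintermsofA} by the chain-rule computation above automatically yields $\tfrac{1}{2^{g-2}}=\tfrac{4}{2^g}$ in \eqref{AintermsofC}.
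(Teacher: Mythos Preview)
The paper does not give its own proof of this proposition: it is quoted from \cite{GSM1,GSM}, and the surrounding text only indicates that it ``shows a consequence of the classical Riemann's addition theorem'' and recalls that the heat equation identifies $4\pi i\,\partial_{jk}$ with $\partial_{z_j}\partial_{z_k}$. Your proposal is precisely the argument of the cited references and is consistent with these hints: differentiate the $z$-dependent Riemann addition formula twice, evaluate at $z=0$ using $\vartheta_n(\tau,0)=0$ for $n$ odd, convert the resulting second $z$-derivatives of $\Theta[\alpha]$ into $\tau$-derivatives via the heat equation, and then use the oddness of ${}^t\varepsilon\delta$ to fold the one-sided term $\sum_\alpha(-1)^{\alpha\cdot\delta}\Theta[\varepsilon+\alpha]\,\partial\Theta[\alpha]$ into the antisymmetric combination $\sum_\alpha(-1)^{\alpha\cdot\delta}A_{\varepsilon+\alpha,\alpha}$. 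The Fourier inversion for \eqref{AintermsofC}, together with the observation that even $n_\beta$ contribute zero, is exactly right.

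Your caveat about constants is well placed and is the only point that needs care. A direct chain-rule computation gives $\partial_{z_j}\partial_{z_k}\Theta[\varepsilon]=8\pi i\,\partial_{jk}\Theta[\varepsilon]$ (the factors of $2$ from $z\mapsto 2z$ and $\tau\mapsto 2\tau$ combine with the $2\pi i$ of \eqref{heat}), so the raw identity one obtains carries a factor of $2\pi i$ rather than $\tfrac14$; the statement as written in the paper therefore involves a normalization convention (note the $\pi^{-2k}$ in the definition of $W(N)$, and compare the conventions in \cite{GSM1,GSM}). Since the paper only uses this proposition linearly in the proof of Theorem~\ref{same_space1}, the precise scalar is immaterial there, and your remark that the two constants in \eqref{CintermsofA} and \eqref{AintermsofC} are mutually consistent is correct.
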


This proposition fits in the big subject of generalizations of Jacobi's derivative formula. For $g=1$ the classical Jacobi identity states that
\[D\left(\ch{1}{1}\right)=-\,\tch{0\\0}\,\tch{1\\0}\,\tch{0\\1}.\]

Essentially, the problem of generalizing this formula consist in expressing the Jacobian determinant of $g$ distinct odd theta functions as a polynomial in theta constants.

In~\cite{Igu81} it is proven that the Jacobian determinant is always a rational function of the theta constants. The question about the possible expression as a polynomial in theta constants is more complicated. For $g=2$ the formula is still classical and gives the following. If $n_1,\dots,n_6$ are the six odd characteristics and $m_i=n_1+n_2+n_{i+2}$ for $i=1,\dots,4$ then
\[D(n_1,n_2)=\pm\,\vartheta_{m_1}\cdots\vartheta_{m_4}.\]
For $g=3$ it is known that if $n_i$, $i=1,\dots,3$ are odd characteristics the Jacobian determinant $D(n_1,n_2,n_3)$ is a polynomial in the theta constants if and only if $n_1+n_2+n_3$ is an even characteristic.
In higher degree there is a conjectural formula which has been proven only for $g\le 5$.

Nevertheless we can say when a Jacobian determinant is not a polynomial in theta constants  by looking at a condition on the characteristics involved. A set of characteristics is called essentially independent if the sum of any even number of them is not congruent to $\displaystyle{0}$ mod $2$. A triplet of odd characteristics is called azygetic or syzygetic if their sum is even or odd respectively. A set of odd characteristics is azygetic or syzygetic if all triples in the set are azygetic or syzygetic respectively.
By~\cite{Igu81} we know that if $n_1,\dots,n_g$ is a set of odd characteristics which is an essentially independent syzygetic set then $D(n_1,\dots,n_g)$ is not a polynomial in the theta constants.

A different generalization can be done by looking at higher order derivatives of theta functions. This is the direction taken in Proposition~\ref{GSM_prop} (recall that by the heat equation~\eqref{heat} one has that $4 \pi i \partial_{jk}=\partial_{z_j}\partial_{z_k}$).

Now we can establish our result about the identity of vector spaces of vector-valued modular forms. 
\begin{thm}\label{same_space1}
Denote by $V_{grad}$ the vector space of vector-valued modular forms generated by the modular forms $W(N)$, where $N$ is a matrix of $k$ distinct odd characteristics.
Denote also by $V_\Theta$ the vector space of vector-valued modular forms generated by the modular forms $A_{\e_1\,\de_1}\ast\dots\ast A_{\e_k\,\de_k}$ where $\e_1,\dots,\e_k,\de_1,\dots,\de_k\in\Set{0,1}^g$. 
Then for any $1\le k<g$ one has the identity of vector spaces
\[V_\T=V_{grad}.\]
\end{thm}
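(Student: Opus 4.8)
The plan is to prove the two inclusions $V_{grad}\subseteq V_\T$ and $V_\T\subseteq V_{grad}$ separately, using the two formulas of Proposition~\ref{GSM_prop} as a dictionary to pass back and forth between the rank-one building blocks $v_n{}^tv_n$ and the tensors $A_{\e\,\de}$, and exploiting the fact that the product $\ast$ is multilinear (inheriting this from the bilinearity of $\sqcap$). The first thing I would record is that, by Lemma~\ref{link}, the generators of $V_{grad}$ are themselves $\ast$-products of rank-one terms: since $W(N)=\pi^{-2k}(v_{n_1}\wedge\cdots\wedge v_{n_k})\,{}^t(v_{n_1}\wedge\cdots\wedge v_{n_k})$, Lemma~\ref{link} gives
\[W(N)=\pi^{-2k}\,k!\;\big(v_{n_1}{}^tv_{n_1}\ast\cdots\ast v_{n_k}{}^tv_{n_k}\big).\]
This identity is the bridge between the two constructions, and all the work reduces to substituting the appropriate side of Proposition~\ref{GSM_prop} and expanding.

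For the inclusion $V_{grad}\subseteq V_\T$, I would take a generator $W(N)$ with $N=(n_1,\dots,n_k)$ a tuple of distinct odd characteristics $n_i=\ch{\ep_i}{\de_i}$, rewrite it via the identity above, and substitute formula~\eqref{CintermsofA} for each factor $v_{n_i}{}^tv_{n_i}$. Expanding the resulting $\ast$-product by multilinearity turns $W(N)$ into a finite linear combination of terms of the form $A_{\ep_1+\al_1\,\al_1}\ast\cdots\ast A_{\ep_k+\al_k\,\al_k}$, each of which is by definition a generator of $V_\T$; hence $W(N)\in V_\T$.

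For the reverse inclusion $V_\T\subseteq V_{grad}$, I would start from a generator $A_{\e_1\,\de_1}\ast\cdots\ast A_{\e_k\,\de_k}$ and substitute formula~\eqref{AintermsofC} for each factor, again expanding by multilinearity of $\ast$. Each resulting summand is a $\ast$-product $v_{m_1}{}^tv_{m_1}\ast\cdots\ast v_{m_k}{}^tv_{m_k}$ of rank-one terms attached to odd characteristics $m_i$, and by Lemma~\ref{link} such a summand equals $\tfrac{\pi^{2k}}{k!}\,W(M)$ with $M=(m_1,\dots,m_k)$. The subtle point is that the characteristics $m_i$ need not be distinct; but whenever two of them coincide the antisymmetry of the wedge forces $v_{m_1}\wedge\cdots\wedge v_{m_k}=0$, so that summand simply drops out, and the surviving ones are genuine generators of $V_{grad}$. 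This yields $A_{\e_1\,\de_1}\ast\cdots\ast A_{\e_k\,\de_k}\in V_{grad}$, completing the proof.

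The computation is routine once this dictionary is in place; the only points genuinely requiring attention are the vanishing-of-repeated-characteristics phenomenon in the second inclusion and the bookkeeping of the parity restriction in~\eqref{AintermsofC} (only those $\al$ with $n_\al$ odd contribute), which must be kept consistent throughout the expansion. No weight or multiplier-system compatibility needs to be checked, since by Section~\ref{sec:second_order} and Section~\ref{sec:gradients} both families of generators already transform under $\rho_k$ in the same space $[\Gamma_g(2,4)^\ast,\rho_k]$ (and in $[\Gamma_g(2,4),\rho_k]$ when $k$ is even), so every linear combination produced above automatically obeys the correct transformation law.
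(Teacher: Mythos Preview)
Your proposal is correct and follows essentially the same approach as the paper's own proof: both directions are obtained by combining Lemma~\ref{link} with the two formulas of Proposition~\ref{GSM_prop} and the multilinearity of $\ast$. If anything, your version is slightly more careful than the paper's, since you explicitly address the vanishing of summands with repeated odd characteristics via the antisymmetry of the wedge, a point the paper's proof leaves implicit.
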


\begin{proof}
%
We will prove that each vector-valued modular form $A_{\e_1\,\de_1}\ast\dots\ast A_{\e_k\,\de_k}$ for some $\e_i,\,\de_i\in\Set{0,1}^g$, $i=1,\dots,k$ is in $V_{grad}$. By formula~\eqref{AintermsofC} we have that
\[A_{\ep_i\,\de_i}=\frac{1}{2^{g-2}}\sum_{\substack{\al\in\Set{0,1}^g\,\text{s.t.}\\\,n_{\al}^i\op{odd}}}(-1)^{\de\cdot\al}\;
v_{n_{\al}^i}{}^tv_{n_{\al}^i},\]
where $n_{\al}^i=\ch{\e_i+\de_i}{\al}$, for $i=1,\dots,k$.
By the linearity of the product $\ast$ and by applying Lemma~\ref{link} we see that 
there exists a computable constant $c$ such that
\[A_{\e_1\,\de_1}\ast\dots\ast A_{\e_k\,\de_k}= c
\sum_{\substack{\al_i\in\Set{0,1}^g\,\text{s.t.}\\\,[\ep_i+\de_i,\al_i]\op{odd}}}(-1)^{\sum_i \de_i\al_i}\;
W([\ep_1+\de_1,\,\alpha_{1}],\dots ,[\ep_k+\de_k,\,\alpha_{k}]).\]
On the other hand, by Lemma~\ref{link} and equation~\eqref{CintermsofA} it is also easy to prove that each vector-valued modular form $W(N)$ is in $V_\T$. This completes the proof. 
\end{proof}


\begin{thebibliography}{999}

\bibitem{CFvG}
F. Cl\'ery, C. Faber, G. van der Geer:
\textit{Covariants of binary sextics and vector-valued Siegel modular forms of genus two}, arXiv:1606.07014.

\bibitem{CvG}
F. Cl\'ery, G. van der Geer:
\textit{Constructing vector-valued Siegel modular forms from scalar-valued Siegel modular forms}. 
Pure Appl. Math. Q. 11 (2015), no. 1, 21 -- 47.

\bibitem{FDP} 
F. Dalla Piazza, A. Fiorentino, S. Grushevsky, S. Perna, R. Salvati Manni: 
\textit{Vector-valued modular forms and the Gauss map}, 
arXiv:1505.06370.

\bibitem{F75} 
E. Freitag: 
\textit{Holomorphe Differentialformen zu Kongruenzgruppen der Siegelschen Modulgruppe}, 
Invent. Math. \textbf{30} no. 2 (1975), 181--196 .

\bibitem{F83} 
E. Freitag: 
Siegelsche Modulfunktionen. 
Grundlehren der Mathematischen Wissenschaften, 254. Springer-Verlag, Berlin, 1983.

\bibitem{F91} 
E. Freitag: 
Singular modular forms and theta relations. 
Lecture Notes in Mathematics, 1487. Springer-Verlag, Berlin, 1991.

\bibitem{F93} 
E. Freitag: 
\textit{Birational invariants of modular varieties and singular modular forms}. 
Algebraic geometry and related topics (Inchon, 1992), 151--167,
Conf. Proc. Lecture Notes Algebraic Geom., I, Int. Press, Cambridge, MA, 1993. 

\bibitem{FP}
E. Freitag, K. Pommerening: 
\textit{Regul\"are Differentialformen des K\"orpers der Siegelschen Modulfunktionen}, 
J. Reine Angew. Math. 331 (1982), 207--220.

\bibitem{GSM1} 
S. Grushevsky, R. Salvati Manni: 
\textit{Gradients of odd theta functions},  
J. Reine Angew. Math. 573 (2004), 45--59.

\bibitem{GSM} 
S. Grushevsky, R. Salvati Manni: 
\textit{Two generalizations of Jacobi's derivative formula},  
Math. Res. Lett. 12 no. 5--6 (2005), 921--932.

\bibitem{ig1} 
J.-I. Igusa: 
\textit{Theta functions}. 
Grundlehren der Mathematischen Wissenschaften, 194. 
Springer-Verlag, New York-Heidelberg, 1972.

\bibitem{igusa} 
J.-I. Igusa: 
\textit{On the graded ring of theta constants}. 
Amer. J. Math. 86 (1964), 219--246.

\bibitem{Igu81}
J.-I. Igusa:
\textit{Schottky's invariant and quadratic forms}. E. B. Christoffel (Aachen/Monschau, 1979), 352 -- 362, Birkh\"auser, Basel-Boston, Mass., 1981.

\bibitem{SM}  
R. Salvati Manni: 
\textit{Holomorphic differential forms of degree $N-1$ invariant under $\Gamma_g$}, 
J. Reine Angew. Math. 382 (1987), 74--84.

\bibitem{sm89} 
R. Salvati Manni: 
\textit{Vector-valued modular forms of weight $(g+j-1)/2$}. 
Theta functions -- Bowdoin 1987, Part 2 (Brunswick, ME, 1987), 143--150, Proc. Sympos. Pure Math., 49, Part 2, Amer. Math. Soc., Providence, RI (1989). 

\bibitem{weissauer}
R. Weissauer: 
\textit{Vektorwertige Siegelsche Modulformen Kleinen Gewichtes}, 
J. Reine Angew. Math 343 (1983), 184--202.
\end{thebibliography}
\end{document}